\theoremstyle{definition}
\newtheorem{definition}{Definition}
\numberwithin{definition}{section}
\theoremstyle{theorem}
\newtheorem{proposition}[definition]{Proposition}
\newtheorem{lemma}[definition]{Lemma}
\newtheorem{theorem}[definition]{Theorem}
\newtheorem{corollary}[definition]{Corollary}
\numberwithin{equation}{section}
\theoremstyle{remark}
\newtheorem{remark}[definition]{Remark}
\newtheorem{question}[definition]{Question}
\def\PP{\mathsf{P}}
\def\expp{\mathsf{e}}
\def\EE{\mathsf{E}}
\def\QQ{\mathsf{Q}}
\def\FF{\mathcal{F}}
\def\nn{\mathsf{n}}
\def\GG{\mathcal{G}}
\def\Wq{W^{(q)}}
\def\AA{\mathcal{A}}
\def\BB{\mathcal{B}}
\def\dd{\mathrm{d}}
\def\Wp{W^{(p)}}
\def\Zqt{Z^{(q,\theta)}}
\def\DD{\mathsf{G}}
 \def\WW{\mathcal{W}^{(q)}_{\alpha,p}}
 \def\ZZ{\mathcal{Z}^{(q,\theta)}_{\alpha,p}}
 \def\ZZq{\mathcal{Z}^{(q)}_{\alpha,p}}
\def\drift{\delta}
\def\diffusion{\sigma^2}
\def\II{\mathcal{I}}
\begin{document}
\title[Exit problems for pssMp with one-sided jumps]{Exit problems for positive self-similar Markov processes with one-sided jumps}

\author{Matija Vidmar}
\address{Department of Mathematics, Faculty of Mathematics and Physics, University of Ljubljana, Slovenia}
\email{matija.vidmar@fmf.uni-lj.si}

\begin{abstract}
A systematic exposition of scale functions is given for positive self-similar Markov processes (pssMp) with one-sided jumps. The scale functions express as  convolution series of the usual scale functions associated with spectrally one-sided L\'evy processes that underly the pssMp through the Lamperti transform. This theory is then brought to bear on solving the spatio-temporal: (i) two-sided exit problem; (ii)  joint first passage problem upwards for the  the pssMp and its multiplicative drawdown (resp. drawup) in the spectrally negative (resp. positive) case. 
\end{abstract}

\thanks{Financial support from the Slovenian Research Agency is acknowledged (programmes Nos. P1-0222 \& P1-0402).}

\keywords{Positive self-similar Markov processes; one-sided jumps; spectrally negative L\'evy processes;  time-changes; scale functions}

\subjclass[2020]{Primary:  60G18, 60G51. Secondary: 60G44.} 

\maketitle

\section{Introduction}
Recent years have seen renewed interest in the (fluctuation) theory of positive self-similar Markov processes (pssMp), that is to say, modulo technicalities, of $(0,\infty)$-valued strong Markov processes $Y=(Y_s)_{s\in [0,\infty)}$ with $0$ as a cemetery state that enjoy the following scaling property: for some $\alpha\in (0,\infty)$, and then all $\{c,y\}\subset (0,\infty)$, the law of $(cY_{sc^{-\alpha}})_{s\in [0,\infty)}$ when issued from $y$ is that of $Y$ when issued from $cy$. See the papers \cite{ckr,pierre,caballero,pardo,chaumont,patie,baurdoux} among others.

On the other hand it is by now widely recognized that the fundamental, and as a consequence a great variety of other non-elementary, exit problems  of upwards or downwards skip-free  strong Markov real-valued processes can often be parsimoniously expressed in terms of a collection of so-called ``scale functions'', be it in discrete or continuous time or space. See e.g. the papers \cite{kkr} for the case of spectrally negative L\'evy processes (snLp), \cite{avram-vidmar} for upwards skip-free random walks, \cite{vidmar:fluctuation} for their continuous-time analogues, \cite{ivanovs,Kyprianou2008} for Markov additive processes and \cite[Proposition~VII.3.2]{revuz-yor}  \cite{zhang,lehoczky} for diffusions. (Note that for processes with stationary independent increments the results for the upwards-skip-free case yield at once also the analogous results for the downwards-skip-free case, ``by duality'' (taking the negative of the process).) This is of course but a flavor of the huge body of literature on a variety of exit problems in this context that can be tackled using the ``scale functions paradigm'', and that is perhaps most comprehensive in the case of snLp. 

Our aim here is to provide a reference theory of scale functions and of the basic exit problems for pssMp with one-sided jumps, where as usual we exclude processes with monotone paths (up to absorption at the origin). More specifically, this paper will deliver:
\begin{enumerate}
\item an exposition of the scale functions for spectrally one-sided pssMp in a unified framework exposing their salient analytical features (Section~\ref{section:scales}); 
\item[and]
\item\label{intro:2} in terms of the latter:
\begin{enumerate}
\item\label{intro:a} the joint Laplace-Mellin transform of, in this order, the first exit time from a compact interval and position at this exit time for the pssMp, separately on the events corresponding to the continuous exit at the upper (resp. lower) boundary and the dis-continuous exit at the lower (resp. upper) boundary in the spectrally negative (resp. positive) case (Theorem~\ref{theorem:tse});
\item\label{intro:b} the joint Laplace-Laplace-Mellin transform-distribution function of, in this order, the first passage time of the multiplicative drawdown (resp. drawup) above a level that is a given function of the current maximum (resp. minimum), the time of the last maximum (resp. minimum) before, multiplicative overshoot of the drawdown (resp. drawup) at, and running supremum (infimum) at this first passage in the spectrally negative (resp. positive) case (Theorem~\ref{theorem:drawdown}\ref{drawdown:ii});
\item\label{intro:c} finally the  Laplace transform of the first passage time upwards (resp. downwards) for the pssMp on the event that this passage occurred before a multiplicative drawdown (resp. drawup) larger than a given function of the current maximum (resp. minimum) was seen (Theorem~\ref{theorem:drawdown}\ref{drawdown:i}). 
\end{enumerate}
\end{enumerate}

This programme was already initiated in \cite[Section~3.3]{zbigniew-bo}, where, however, only  the temporal two-sided exit problem in the spectrally positive case with no killing was considered. This paper then extends and complements those results by handling  the general completely asymmetric case, the joint spatio-temporal two-sided exit problem, as well as the drawdown (drawup) first passage problem, providing a systematic theory of scale functions for spectrally one-sided pssMp en route. 

In terms of existent related literature we must refer the reader also to \cite[Theorem~2.1]{pierre} \cite[Theorem~13.10]{kyprianou} for the first passage problem upwards of spectrally negative pssMp -- we touch on the latter only tangentially in this paper, leaving in fact open the parallel first passage downward problem for the spectrally positive case, see Question~\ref{question:downward}. Another query left open is given in

\begin{question}\label{question:drawup}
Can the problem of the first passage of the multiplicative drawup (resp. drawdown) in the spectrally negative (resp. positive) case similarly be solved (semi)explicitly in terms of the scale functions for pssMp, to be introduced presently?
\end{question}

The two main elements on which we will base our exposition are (i) the Lamperti transform \cite{lamperti} (see also \cite[Theorem 13.1]{kyprianou}) through which any pssMp can be expressed as the exponential of a (possibly killed) L\'evy process time-changed by the inverse of its exponential functional and (ii) the results of \cite{zbigniew-bo} concerning the two-sided exit problem of state-dependent killed snLp. It will emerge that even though pssMp, unlike processes with stationary independent increments, do not enjoy spatial homogeneity, nevertheless a  theory of scale functions almost entirely akin to that of snLp can be developed. In particular, because of the self-similarity property, it will turn out that one can still cope with scale functions depending on only one spatial variable (rather than two, which one would expect for a general spatially non-homogeneous Markov process with one-sided jumps). 
Of course the involvement of the time-change in the Lamperti transform means that some formulae end up being more involved than in the case of snLp and, for instance, appear to make Question~\ref{question:drawup} fundamentally more difficult than its snLp analogue. Nevertheless, the relative success of this programme begs 

\begin{question}\label{question:csbp}
Can a similar theory of scale functions be developed for continuous-state branching processes, which are another class of time-changed (and stopped), this time necessarily spectrally positive, L\'evy processes? (Again the transform is due to Lamperti \cite{lamperti-csbp}, see also \cite[Theorem~12.2]{kyprianou}.)
\end{question}
This too is left open to future work; we would point out only that a family of scale functions depending on a single spatial variable would probably no longer suffice, but that instead two would be needed. 

The organization of the remainder of this paper is as follows. We set the notation, recall the Lamperti transform and detail some further necessary tools in Section~\ref{section:preliminaries}. Then, using the results of \cite{zbigniew-bo}, we expound on the theory of scale functions for pssMp in Section~\ref{section:scales}. Sections~\ref{section:tse} and~\ref{section:drawdowns} contain, respectively, solutions to the two-sided exit and drawdown (drawup) first passage problems delineated above. Lastly, Section~\ref{section:conclusion} touches briefly on an application to a trailing stop-loss problem before closing. 

\section{Setting,  notation and preliminaries}\label{section:preliminaries}
Throughout we will write $\QQ[W]$ for $\EE_\QQ[W]$, $\QQ[W;A]$ for $\EE_\QQ[W\mathbbm{1}_A]$ and $\QQ[W\vert \mathcal{H}]$ for $\EE_\QQ[W\vert \mathcal{H}]$. More generally the integral $\int fd\mu$ will be written $\mu[f]$ etc. For $\sigma$-fields $\AA$ and $\BB$, $\AA/\BB$ will denote the set of $\AA/\BB$-measurable maps; $\mathcal{B}_A$ is the Borel (under the standard topology) $\sigma$-field on $A$.

We begin by taking $X=(X_t)_{t\in [0,\infty)}$, a snLp under the probabilities $(\PP_x)_{x\in \mathbb{R}}$ in the filtration $\FF=(\FF_t)_{t\in [0,\infty)}$. This means that $X$ is a  c\`adl\`ag, real-valued $\FF$-adapted process with stationary independent  increments relative to $\FF$, no positive jumps and non-monotone paths, that $\PP_0$-a.s. vanishes at zero; furthermore, for each $x \in \mathbb{R}$, the law of $X$ under $\PP_x$ is that of $x+X$ under $\PP_0$. We refer to \cite{bertoin,kyprianou,sato,doney} for the general background on (the fluctuation theory of)  L\'evy processes and to \cite[Chapter~VII]{bertoin} \cite[Chapter~8]{kyprianou} \cite[Chapter~9]{doney}  \cite[Section~9.46]{sato} for snLp in particular. As usual we set $\PP:=\PP_0$ and we assume $\FF$ is right-continuous. Expressions such as ``independent'', ``a.s.'', etc., without  further qualification, will mean ``independent (a.s., etc.) under $\PP_x$ for all $x\in \mathbb{R}$'', this having been of course also the case  for the occurence of ``stationary independent'' just above.

We let next $\expp$ be a strictly positive $\FF$-stopping time such that for some (then unique) $p\in [0,\infty)$, $\PP_x[g(X_{t+s}-X_t)\mathbbm{1}_{\{\expp>t+s\}}\vert\FF_t]=\PP[g(X_s)]e^{-p s}\mathbbm{1}_{\{\expp>t\}}$ a.s.-$\PP_x$ for all $x\in \mathbb{R}$, whenever $\{s,t\}\subset [0,\infty)$ and $g\in \mathcal{B}_\mathbb{R}/\mathcal{B}_{[0,\infty]}$;  in particular $\expp$ is exponentially distributed with rate $p$ ($\expp=\infty$ a.s. when $p=0$) independent of $X$.  Finally take an $\alpha\in \mathbb{R}$. 

We now associate to $X$, $\expp$ and $\alpha$ the process $Y=(Y_s)_{s\in [0,\infty)}$ as follows.  Set: $$I_t:=\int_0^{t }e^{\alpha X_u}\dd u,\quad t\in [0,\infty];$$
$$\phi_s:=\inf\{t\in [0,\infty):I_t>s\},\quad s\in [0,\infty);$$
finally $$Y_s:=
\begin{cases}
e^{X_{\phi_s}} & \text{for }s\in [0,\zeta)\\
\partial & \text{for }s\in [\zeta,\infty)
\end{cases},$$
where we consider $Y$ as having lifetime $\zeta:=I_\expp$ with $\partial\notin (0,\infty)$  the cemetery state. We take $\partial=0$ or $\partial=\infty$ according as $\alpha\geq 0$ or $\alpha< 0$ and set for convenience $\QQ_y:=\PP_{\log y}$ for $y\in (0,\infty)$ (naturally $\QQ:=\QQ_1$). 

When $\alpha>0$,  then $Y$ is nothing  but the pssMp associated to $X$, $\alpha$ and $\expp$ via the Lamperti transform. Likewise, when $\alpha<0$, then $1/Y$ ($1/\infty=0$) is the pssMp associated to $-X$, $-\alpha$ and $\expp$. Finally, when $\alpha=0$, then $Y$ is just the exponential of $X$ that has been killed at $\expp$ and sent to $-\infty$ ($e^{-\infty}=0$). Conversely, any positive pssMp with one-sided jumps and non-monotone paths up to absorption can be got in this way (possibly by enlarging the underlying probability space). For convenience we will refer to the association of $Y$ to $X$, as above, indiscriminately (i.e. irrespective of the sign of $\alpha$) as simply the Lamperti transform. 

Denote next by $\overline{Y}=(\overline{Y}_s)_{s\in [0,\zeta)}$ (resp. $\overline{X}=(\overline{X}_t)_{t\in [0,\infty)}$) the running supremum process of $Y$ (resp. $X$). We may then define the multiplicative (resp. additive) drawdown process/regret process/the process reflected multiplicatively (resp. additively) in its supremum $R=(R_s)_{s\in [0,\zeta)}$ (resp. $D=(D_t)_{t\in [0,\infty)}$) of $Y$ (resp. $X$) as follows: 
$$ R_s:=\frac{\overline{Y}_s}{Y_s}$$
for $s\in [0,\zeta)$ (resp. $$D_t:=\overline{X}_t-X_t$$ for $t\in [0,\infty)$). We also set ($Y_{0-}:=Y_0$) $$L_s:=\sup\{v\in [0,s]:\overline{Y}_v\in \{Y_v,Y_{v-}\}\},\quad s\in [0,\zeta),$$
and ($X_{0-}:=X_0$) $$G_t:=\sup\{u\in [0,t]:\overline{X}_u\in \{X_u,X_{u-}\}\},\quad t\in [0,\infty),$$
so that, for $s\in [0,\zeta)$ (resp. $t\in [0,\infty)$) $L_s$ (resp. $G_t$) is the last time $Y$ (resp. $X$) is at its running maximum on the interval $[0,s]$ (resp. $[0,t]$). Finally, for $c\in (0,\infty)$ and $r\in \mathcal{B}_{(0,\infty)}/\mathcal{B}_{(1,\infty)}$, respectively for $a\in \mathbb{R}$ and $s\in \mathcal{B}_{\mathbb{R}}/\mathcal{B}_{(0,\infty)}$, we introduce the random times $T_c^\pm$ and $\Sigma_r$, resp. $\tau_a^\pm$ and $\sigma_s$, by setting 
$$T_c^\pm:=\inf \{v\in (0,\zeta):\pm Y_v>\pm c\}\text{ and }\Sigma_r:=\inf\{v\in (0,\zeta):R_v>r(\overline{Y}_v)\},$$
resp. 
$$\tau_a^\pm:=\inf \{u\in (0,\infty):\pm X_u>\pm a\}\text{ and }\sigma_s:=\inf\{u\in (0,\infty):D_u>s(\overline{X}_u)\}$$
(with the usual convention $\inf\emptyset=\infty$). By  far the most important case for $r$, resp. $s$, is when this function is constant, in which case $\Sigma_r$, resp. $\sigma_s$, becomes the first passage time upwards for $R$, resp. $D$. We keep the added generality since it is with essentially no cost to the complexity of the results, and may prove valuable in applications.

\begin{remark}\label{rmk:dual}
The case when, ceteris paribus, $X$ is spectrally positive rather than spectrally negative, may be handled by applying the results to $X':=-X$ in place of $X$ and $\alpha':=-\alpha$ in place of $\alpha$: if $Y'$ corresponds to $(X',\alpha',\expp)$ as $Y$ corresponds to $(X,\alpha,\expp)$, then $Y'=1/Y$, the running infimum process of $Y$ is equal to $\underline{Y'}=1/\overline{Y}$ and the multiplicative drawup of $Y'$ is $\hat{R}':=\frac{Y'}{\underline{Y'}}=R$. Hence, viz. Item~\eqref{intro:2} from the Introduction, our results will apply (modulo trivial spatial transformations) also in the case when $X$ is spectrally positive and we have lost, thanks to allowing $\alpha$ to be an arbitrary real number (so not necessarily positive), no generality in assuming that $X$ is spectrally negative, rather than merely completely asymmetric.
\end{remark}

Some further notation. As usual we will denote by $\psi$ the Laplace exponent of $X$, $\psi(\lambda):=\log \PP[e^{\lambda X_1}]$ for $\lambda\in [0,\infty)$. It has the representation 
\begin{equation}\label{eq:laplace-exp}
\psi(\lambda)=\frac{\diffusion}{2}\lambda^2+\mu\lambda+\int_{(-\infty,0)}(e^{\lambda y}-\mathbbm{1}_{[-1,0)}(y)\lambda y-1)\nu(dy),\quad \lambda\in [0,\infty),
\end{equation}
for some (unique) $\mu\in \mathbb{R}$, $\diffusion\in [0,\infty)$, and measure $\nu$ on $\mathcal{B}_\mathbb{R}$, supported by $(-\infty,0)$, and satisfying $\int (1\land y^2 )\nu(\dd y)<\infty$. When $X$ has paths of finite variation, equivalently $\diffusion=0$ and $\int (1\land \vert y\vert)\nu(dy)<\infty$, we set $\drift:=\mu+\int_{[-1,0)}\vert y\vert\nu(dy)$; in this case we must have $\drift\in (0,\infty)$ and $\nu$ non-zero. For convenience we  interpret $\drift=\infty$ when $X$ has paths of infinite variation. We also put $\Phi:=(\psi\vert_{[\Phi(0),\infty)})^{-1}$ (the inverse function is meant), where $\Phi(0)$ is the largest zero of $\psi$; and introduce the shorthand notation (cf. \cite[Eq.~(13.54)]{kyprianou})
\begin{equation}\label{eq:shorthand}
\mathsf{a}_{\alpha,p}(\lambda,k):=\left(\prod_{l=0}^k(\psi(\lambda-l\alpha)-p)\right)^{-1},\quad k\in \mathbb{N}_0,
\end{equation}
whenever it is defined.

We recall now two tools from the fluctuation theory of snLp that will prove useful later on. 

The first of these is the vehicle of the Esscher transform \cite[Eq.~(8.5)]{kyprianou}. To wit, under certain technical conditions on the underlying filtered space that we may assume hold without loss of generality when proving distributional results, for any $\theta\in [0,\infty)$ there exists a unique family of measures $(\PP^\theta_x)_{x\in \mathbb{R}}$ on $\FF_\infty$ such that \cite[Corollary~3.11]{kyprianou} for any $\FF$-stopping time $T$, 
$$\frac{\dd\PP^{\theta}_x\vert_{\FF_T}}{\dd\PP_x\vert_{\FF_T}}=e^{\theta(X_T-X_0)-\psi(\theta)T}\text{ a.s.-}\PP_x\text{ on }\{T<\infty\},\quad x\in \mathbb{R};$$
 of course we set $\PP^\theta:=\PP^\theta_0$.

The second tool is It\^o's \cite{ito,blumenthal} Poisson point process (Ppp) of excursions of $X$ from its maximum. Let $\mathbb{D}$ be the space of c\`adl\`ag real-valued paths on $[0,\infty)$, let $\Delta\notin \mathbb{D}$ be a coffin state, and set $\chi(\omega):=\inf\{u\in  (0, \infty):\omega(u)\geq  0\}$ for $\omega\in \mathbb{D}$. Then the reader will recall that under $\PP$ the running supremum $\overline{X}$ serves as a continuous local time for $X$ at the maximum and that, moreover, the process $\epsilon=(\epsilon_g)_{g\in (0,\infty)}$, defined as follows: 
\begin{quote}
 for $g\in (0,\infty)$, $\epsilon_g(u):=X_{(\tau_{g-}^++u)\land \tau_g^+}-X_{\tau_{g-}^+-}$ for $u\in [0,\infty)$ (of course $X_{\tau_{g-}^+-}=g$ off the event $\{X_0\ne 0\}$, which is $\PP$-negligible) if $g\in \DD:=\{f\in (0,\infty):\tau_{f-}^+<\tau_f^+\}$; $\epsilon_g:=\Delta$ otherwise, 
\end{quote}
is, under $\PP$, a  $\mathbb{D}$-valued Ppp in the filtration $(\FF_{\tau^+_a})_{a\in [0,\infty)}$, absorbed on first entry into a path for which $\chi=\infty$, and whose characteristic measure we will denote by $\nn$ (so the intensity measure of $\epsilon$ is $\mathsf{l}\times \nn$, where $\mathsf{l}$ is Lebesgue measure on $\mathcal{B}_{(0,\infty)}$) \cite{greenwood-pitman,rogers}.  $\xi$ will denote the coordinate process on $\mathbb{D}$, $\underline{\xi}_\infty$ will be its overall infimum, and, for $s\in \mathbb{R}$, $S_s^\pm$ will be the first hitting time of the set $\pm (s,\infty)$ by the process $\pm \xi$. 

Finally, in terms of general notation, we have as follows. $\star$ will denote  convolution on the real line: for $\{f,g\}\subset \mathcal{B}_{\mathbb{R}}/\mathcal{B}_{[-\infty,\infty]}$, $$(f\star g)(x):=\int_{-\infty}^\infty f(y)g(x-y)\dd y,\quad x\in \mathbb{R},$$ whenever the Lebesgue integral is well-defined. For a function $f\in \mathcal{B}_{\mathbb{R}}/\mathcal{B}_{[0,\infty)}$ vanishing on $(-\infty,0)$, $\hat{f}:[0,\infty)\to [0,\infty]$ will be its Laplace transform, $$\hat{f}(\lambda):=\int_0^\infty e^{-\lambda x}f(x)\dd x,\quad \lambda\in [0,\infty);$$ sometimes we will write $f^\wedge$ in place of $\hat{f}$ for typographical ease.  Given an expression $\mathcal{R}(x)$ defined for $x\in R$ we will write $\mathcal{R}(\cdot)$ for the function $(R\ni x\mapsto \mathcal{R}(x))$, with $R$ being understood from context. We interpret $a/\infty=0$ for $a\in [0,\infty)$.

\section{Scale functions for pssMp}\label{section:scales}
Associated to $X$ is a family  $(\Wq)_{q\in [0,\infty)}$ of so-called scale functions that feature heavily in first passage/exit and related fluctuation identities. Specifically, for $q\in [0,\infty)$, $\Wq$ is characterized as the unique function mapping $\mathbb{R}$ to $[0,\infty)$, vanishing on $(-\infty,0)$, continuous on $[0,\infty)$, and having Laplace transform $$\widehat{\Wq}(\theta)=\frac{1}{\psi(\theta)-q},\quad \theta\in (\Phi(q),\infty).$$ As usual we set $W:=W^{(0)}$ for \emph{the} scale function of $X$. We refer to \cite{kkr} for an overview of the general theory of scale functions of snLp, recalling here explicitly only  the relation \cite[Eq.~(25)]{kkr}  
\begin{equation}\label{eq:Esscher-W}
W^{(p)}=e^{\Phi(p)\cdot}W_{\Phi(p)},
\end{equation}
where $W_{\Phi(p)}$ is the scale function for $X$ under $\PP^{\Phi(p)}$, and the following estimate of the proof of \cite[Lemma~3.6, Eq.~(55)]{kkr}: 
\begin{equation} \label{eq:estimate}
(\star^{k+1}W)(x)\leq \frac{x^k}{k!}W(x)^{k+1},\quad x\in \mathbb{R},\, k\in \mathbb{N}_0,
\end{equation}
where $\star^{k+1}W$ is the $(k+1)$-fold convolution of $W$ with itself.

It will emerge that the correct (from the point of view of fluctuation theory) analogues of these functions in the setting of pssMp are contained in
\begin{definition}
For each $q\in [0,\infty)$, let  $\mathcal{W}^{(q)}_{\alpha,p}=\mathcal{W}\circ \log$ for \cite[Lemma~2.1]{zbigniew-bo} the unique locally bounded and Borel measurable function $\mathcal{W}:\mathbb{R}\to \mathbb{R}$ such that 
\begin{equation}\label{eq:convolution:W}
\mathcal{W}=\Wp+q\Wp\star (e^{\alpha\cdot} \mathbbm{1}_{[0,\infty)}\mathcal{W}).
\end{equation}
\end{definition}
\begin{remark}
By definition $\mathcal{W}^{(q)}_{\alpha,p}$ vanishes on $(0,1)$, while $\mathcal{W}^{(q)}_{\alpha,p}(1)=\Wp(0)$.
\end{remark}
\begin{remark}\label{remark:equivalent-W}
The convolution equation \eqref{eq:convolution:W} in the definition is equivalent  \cite[Lemma~2.1 \& Eq.~(2.10)]{zbigniew-bo} to each of 
$$\mathcal{W}=W+W\star ((qe^{\alpha \cdot}+p)\mathbbm{1}_{[0,\infty)}\mathcal{W})$$ and  $$\mathcal{W}=W^{(p+q)}+qW^{(p+q)}\star ((e^{\alpha \cdot}-1)\mathbbm{1}_{[0,\infty)}\mathcal{W}).$$
\end{remark}
\begin{remark}\label{remark:special}
Two special cases: (i) when $\alpha=0$, then $\mathcal{W}^{(q)}_{\alpha,p}=W^{(q+p)}\circ \log$ and (ii) when $q=0$, then  $\mathcal{W}^{(q)}_{\alpha,p}=\Wp\circ \log$.
\end{remark}

\begin{proposition}\label{proposition:Esscher:W}
Let $q\in[0,\infty)$. For $\theta\in [0,\Phi(p)]$ one has  $\mathcal{W}_{\alpha,p}^{(q)}(y)={_\theta\mathcal{W}}_{\alpha,p-\psi(\theta)}^{(q)}(y)y^\theta $, $y\in (0,\infty)$, where the left subscript $\theta$ indicates that the quantity is to be computed for the process $X$ under $\PP^\theta$. 
\end{proposition}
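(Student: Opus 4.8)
The plan is to leverage the uniqueness built into the definition of $\mathcal{W}_{\alpha,p}^{(q)}$. Write $\mathcal{W}_{\alpha,p}^{(q)}=\mathcal{W}\circ\log$ with $\mathcal{W}:\mathbb{R}\to\mathbb{R}$ the locally bounded Borel function of \eqref{eq:convolution:W}, and put $\tilde{\mathcal{W}}:=e^{-\theta\cdot}\mathcal{W}$, which is again a locally bounded Borel function $\mathbb{R}\to\mathbb{R}$ vanishing on $(-\infty,0)$. I will show that $\tilde{\mathcal{W}}$ satisfies the convolution equation \eqref{eq:convolution:W} associated to $X$ \emph{under} $\PP^\theta$, with $p$ replaced by $p-\psi(\theta)$ and $q$ unchanged; uniqueness then forces $\tilde{\mathcal{W}}\circ\log={_\theta\mathcal{W}}_{\alpha,p-\psi(\theta)}^{(q)}$, and the assertion follows on setting $x=\log y$, since $\mathcal{W}(x)=e^{\theta x}\tilde{\mathcal{W}}(x)$.

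The one ingredient I would record first is the Esscher identity for scale functions in the form
$${}_\theta W^{(p-\psi(\theta))}(x)=e^{-\theta x}\Wp(x),\qquad x\in\mathbb{R},$$
valid for $\theta\in[0,\Phi(p)]$, where ${}_\theta W^{(p-\psi(\theta))}$ is the $(p-\psi(\theta))$-scale function of $X$ under $\PP^\theta$; this generalizes \eqref{eq:Esscher-W} (the case $\theta=\Phi(p)$) and is cleanest via Laplace transforms: under $\PP^\theta$ the Laplace exponent of $X$ is $\lambda\mapsto\psi(\lambda+\theta)-\psi(\theta)$, so on a suitable right half-line $e^{\theta\cdot}\,{}_\theta W^{(p-\psi(\theta))}$ has transform $\bigl(\psi(\lambda)-\psi(\theta)-(p-\psi(\theta))\bigr)^{-1}=(\psi(\lambda)-p)^{-1}=\widehat{\Wp}(\lambda)$, and two functions vanishing on $(-\infty,0)$ and continuous on $[0,\infty)$ with a common Laplace transform coincide. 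The hypothesis $\theta\le\Phi(p)$ enters precisely here: by convexity of $\psi$ it gives $\psi(\theta)\le p$, so $p-\psi(\theta)\ge0$ and the object ${_\theta\mathcal{W}}_{\alpha,p-\psi(\theta)}^{(q)}$ is defined (via \cite[Lemma~2.1]{zbigniew-bo}) in the first place.

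It then remains to multiply \eqref{eq:convolution:W} through by $e^{-\theta x}$. Since $\Wp$ is supported on $[0,\infty)$ and $\mathcal{W}$ vanishes on $(-\infty,0)$, the convolution there is the finite integral $\int_0^x\Wp(x-u)e^{\alpha u}\mathcal{W}(u)\,\dd u$, across which $e^{-\theta x}=e^{-\theta(x-u)}e^{-\theta u}$ distributes, whence $e^{-\theta\cdot}\bigl(\Wp\star(e^{\alpha\cdot}\mathbbm{1}_{[0,\infty)}\mathcal{W})\bigr)=(e^{-\theta\cdot}\Wp)\star(e^{\alpha\cdot}\mathbbm{1}_{[0,\infty)}\tilde{\mathcal{W}})$. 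Combined with the Esscher identity this gives $\tilde{\mathcal{W}}={}_\theta W^{(p-\psi(\theta))}+q\,{}_\theta W^{(p-\psi(\theta))}\star(e^{\alpha\cdot}\mathbbm{1}_{[0,\infty)}\tilde{\mathcal{W}})$, which is exactly \eqref{eq:convolution:W} for $X$ under $\PP^\theta$ with killing rate $p-\psi(\theta)\in[0,\infty)$, and the uniqueness clause of the definition completes the proof. I do not foresee a real obstacle: the argument is tilting-and-uniqueness bookkeeping, the only substantive points being to track the killing rate correctly --- it becomes $p-\psi(\theta)$, not $p$, which is what forces $\theta\le\Phi(p)$ --- and to invoke the scale-function Esscher relation with the matching index; note $\theta=0$ is trivial, while $\theta=\Phi(p)$ recovers, now with a $q$-dependent refinement, \eqref{eq:Esscher-W}.
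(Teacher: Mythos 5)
Your proof is correct and follows essentially the same route as the paper: exponentially tilt the solution, use the Esscher relation between scale functions, and conclude by the uniqueness clause of the defining convolution equation. The only (immaterial) difference is that you work directly with the defining equation \eqref{eq:convolution:W} and the relation $W^{(p)}=e^{\theta\cdot}W^{(p-\psi(\theta))}_\theta$, whereas the paper uses the equivalent $W^{(p+q)}$-based form from Remark~\ref{remark:equivalent-W} together with $W^{(q+p)}=e^{\theta\cdot}W^{(q+p-\psi(\theta))}_\theta$.
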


\begin{proof}
By Remark~\ref{remark:equivalent-W},  ${_\theta\mathcal{W}}_{\alpha,p-\psi(\theta)}^{(q)}=\mathcal{W}\circ \log$, where $\mathcal{W}$ is the unique locally bounded and Borel measurable solution to 
$$\mathcal{W}=W^{(q+p-\psi(\theta))}_\theta+q W^{(q+p-\psi(\theta))}_\theta\star ((e^{\alpha \cdot}-1)\mathbbm{1}_{[0,\infty)}\mathcal{W}).$$
(Again the subscript $\theta$ indicates that the quantity is to be computed for $X$ under $\PP^\theta$.) In other words, because  \cite[Eq.~(8.30)]{kyprianou} $W^{(q+p)}=e^{\theta\cdot} W^{(q+p-\psi(\theta))}_\theta$, it is seen, still by Remark~\ref{remark:equivalent-W}, that
$e^{\theta\cdot}\mathcal{W}$ satisfies the convolution equation that characterizes $\mathcal{W}^{(q)}_{\alpha,p}\circ e^\cdot$.
\end{proof}
Next, just as in the case of snLp, there is a representation of the scale functions in terms of the excursion measure $\nn$.

\begin{lemma}\label{lemma:fundamental}
For $r\in [1,\infty)$, $q\in [0,\infty)$ and then $c\in [r,\infty)$, $c>1$, one has
$$-\log \frac{\WW(r)}{\WW(c)}=\frac{p\log\frac{c}{r}+\frac{q}{\alpha} (c^\alpha-r^\alpha)}{\drift}+\int_{\log(r)}^{\log(c)}\nn[1-e^{-\int_0^\chi(qe^{\alpha g}e^{\alpha\xi_t}+p)\dd t}\mathbbm{1}_{\{-\underline{\xi}_\infty\leq g\}}]\dd g,$$
where the first expression appears only when $X$ has paths of finite variation and it must then further be understood in the limiting sense when $\alpha=0$.
\end{lemma}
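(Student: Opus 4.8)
\emph{Proof plan.} The plan is to recognize $\WW(r)/\WW(c)$ as a two-sided exit transform of the state-dependent killed snLp $X$, and then to evaluate it by It\^o excursion theory at the running supremum. First I would invoke the results of \cite{zbigniew-bo} on the two-sided exit problem for snLp killed at a state-dependent rate: since \eqref{eq:convolution:W} is, for the rate $\omega(\cdot):=qe^{\alpha\cdot}+p$, exactly the convolution equation those results attach to this problem, one obtains, for $1\leq r\leq c$ with $c>1$ (and writing $x:=\log r\in[0,\infty)$, $y:=\log c\in(0,\infty)$, $x\leq y$),
$$\frac{\WW(r)}{\WW(c)}=\EE_{x}\Big[e^{-\int_0^{\tau_{y}^+}(qe^{\alpha X_u}+p)\dd u};\ \tau_{y}^+<\tau_0^-\Big]=\QQ_r\big[e^{-qT_c^+};\ T_c^+<\zeta\wedge T_1^-\big],$$
the second equality being the Lamperti transform: $T_c^+=\int_0^{\tau_y^+}e^{\alpha X_u}\dd u$ turns the rate-$q$ killing of $Y$ into the rate $qe^{\alpha\cdot}$ of $X$, and $\zeta=I_\expp$ into the rate-$p$ killing of $X$. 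When $x=0$ and $X$ has paths of infinite variation, both sides of the asserted identity are $+\infty$ ($\WW(1)=\Wp(0)=0$; $\tau_0^-=0$ $\PP_0$-a.s.; $\drift=\infty$ and $\int_0^{y}\nn(-\underline{\xi}_\infty>g)\dd g=\infty$), so I may assume $\WW(r)>0$.

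Next I would run $X$ under $\PP_x$ with its It\^o Ppp of excursions $(\epsilon_g)$ from the supremum, indexed so that $\epsilon_g$ is the excursion made when $\overline{X}=g$ (by spatial homogeneity this is, suitably shifted, of the same law as $\epsilon$ from the excerpt, with intensity $\mathsf{l}\times\nn$ on $g>x$). On $\{\tau_y^+<\tau_0^-\}$ one has $\tau_y^+<\infty$; every $\epsilon_g$ with $g\in[x,y)$ is finite with $-\underline{(\epsilon_g)}_\infty\leq g$ (i.e.\ does not reach $0$ from level $g$); and
$$\int_0^{\tau_y^+}\!(qe^{\alpha X_u}+p)\dd u=\frac{1}{\drift}\int_x^y(qe^{\alpha g}+p)\dd g+\sum_{g\in[x,y)}\int_0^{\chi(\epsilon_g)}\!\big(qe^{\alpha(g+\epsilon_g(t))}+p\big)\dd t,$$
the first summand being the contribution of the time $X$ spends at its maximum --- present only if $X$ has paths of finite variation (then $\dd u=\dd g/\drift$), read as $0$ when $\drift=\infty$. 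Conversely, any $g\in[x,y)$ with $-\underline{(\epsilon_g)}_\infty>g$ forces $\tau_0^-<\tau_y^+$; thus $\mathbbm{1}_{\{\tau_y^+<\tau_0^-\}}=\prod_{g\in[x,y)}\mathbbm{1}_{\{-\underline{(\epsilon_g)}_\infty\leq g\}}$.

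Then I would integrate out the (independent) killing clock by conditioning on the path of $X$, which contributes the factor $\exp(-\int_0^{\tau_y^+}(qe^{\alpha X_u}+p)\dd u)$, substitute the two displays above, and apply the exponential formula for the Ppp $(\epsilon_g)$, obtaining
$$\frac{\WW(r)}{\WW(c)}=\exp\!\Big(-\frac{1}{\drift}\int_x^y(qe^{\alpha g}+p)\dd g-\int_x^y\nn\big[1-e^{-\int_0^\chi(qe^{\alpha g}e^{\alpha\xi_t}+p)\dd t}\mathbbm{1}_{\{-\underline{\xi}_\infty\leq g\}}\big]\dd g\Big),$$
where I have used that $1-e^{-\int_0^\chi(\cdots)}\mathbbm{1}_{\{-\underline{\xi}_\infty\leq g\}}=\mathbbm{1}_{\{-\underline{\xi}_\infty>g\}}+(1-e^{-\int_0^\chi(\cdots)})\mathbbm{1}_{\{-\underline{\xi}_\infty\leq g\}}$ is exactly the $\nn$-``rate'' of an excursion that is either deep ($-\underline{\xi}_\infty>g$) or along which the clock rings. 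Taking $-\log$, evaluating $\frac1\drift\int_x^y(qe^{\alpha g}+p)\dd g=\drift^{-1}\big(\tfrac q\alpha(e^{\alpha y}-e^{\alpha x})+p(y-x)\big)$, and substituting $e^{\alpha x}=r^\alpha$, $e^{\alpha y}=c^\alpha$, $y-x=\log(c/r)$ would give the claim; the $\alpha=0$ version follows by continuity in $\alpha$ (or directly, $\int_x^y q\dd g=q(y-x)$).

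The hard part will be the excursion step: justifying the decomposition carefully --- in particular, that on the complement of $\{\tau_y^+<\tau_0^-\}$ the $+\infty$-valued part $\mathbbm{1}_{\{-\underline{\xi}_\infty>g\}}\cdot\infty$ of the functional correctly collapses the contribution to $0$, so the exponential formula is applied with a $[0,\infty]$-valued functional --- cleanly handling the finite- versus infinite-variation dichotomy (the drift-at-the-maximum term; finiteness of $\nn$, hence of the integrals, precisely in the finite-variation case; the degenerate $r=1$ infinite-variation case noted above), and verifying the local integrability on $[x,y]\subset(0,\infty)$ needed to invoke the exponential formula. The input from \cite{zbigniew-bo} and the closing algebra are routine by comparison.
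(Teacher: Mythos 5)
Your proposal is correct and follows essentially the same route as the paper: identify $\WW(r)/\WW(c)$ with the two-sided exit functional $\PP_{\log r}[e^{-\int_0^{\tau_{\log c}^+}(qe^{\alpha X_u}+p)\dd u};\tau_{\log c}^+<\tau_0^-]$ via the convolution characterization from \cite{zbigniew-bo} (the paper routes this through Theorem~\ref{theorem:tse}\ref{tse:smooth}, whose proof is exactly that application), then split off the finite-variation time at the maximum and apply the exponential formula for the excursion Ppp. The technical points you flag (infinite-variation dichotomy, the $+\infty$-valued functional killing deep/infinite excursions, the degenerate $r=1$ case) are handled at the same level of detail in the paper's own argument.
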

The proof of this lemma is deferred to the next section (p.~\pageref{proof-conclusion}) where it will, of course independently,  fall  out naturally from the study of the two-sided exit problem.

The following proposition now gathers some basic analytical properties of the system $(\mathcal{W}^{(q)}_{\alpha,p})_{q\in [0,\infty)}$.
\begin{proposition}\label{proposition:analytic}
For each $q\in [0,\infty)$:
\begin{enumerate}
\item\label{analytic:a} One has $\mathcal{W}^{(q)}_{\alpha,p}= \mathcal{W}^{(q)}_{\alpha,p,\infty}\circ \log$, where $ \mathcal{W}^{(q)}_{\alpha,p,\infty}:=\uparrow \!\!\!\!\text{-}\lim_{n\to\infty} \mathcal{W}^{(q)}_{\alpha,p,n}$, with $\mathcal{W}^{(q)}_{\alpha,p,0}:=\Wp$ and then inductively $\mathcal{W}^{(q)}_{\alpha,p,n+1}:=\Wp+q\Wp\star (e^{\alpha \cdot}\mathcal{W}^{(q)}_{\alpha,p,n})$ for $n\in \mathbb{N}_0$, in other words $\mathcal{W}^{(q)}_{\alpha,p,\infty}=\sum_{k=0}^{\infty} q^k [\star_{l=0}^k(\Wp e^{l\alpha \cdot})]$.
\item\label{analytic:b} For $\alpha\leq 0$, $\WW\leq W^{(p+q)}\circ \log$ and  $\widehat{\mathcal{W}^{(q)}_{\alpha,p,\infty}}=\sum_{k=0}^\infty \mathsf{a}_{\alpha,p}(\cdot,k)q^{k}<\infty$ on $(\Phi(p+q),\infty)$. 
\item\label{analytic:c} $\mathcal{W}^{(q)}_{\alpha,p}$ is continuous strictly increasing on $[1,\infty)$, and $\mathcal{W}^{(q)}_{\alpha,p}\vert_{(1,\infty)}$ admits a locally bounded left-continuous left- and a locally bounded right-continuous right- derivative that coincide everywhere except at most on a countable set: in fact they coincide everywhere when $X$ has paths of infinite variation and otherwise they agree off $\{x\in (0,\infty):\text{the L\'evy measure of }X\text{ has positive mass at }-x\}$.  The left and right derivative can be made explicit: for $r\in (1,\infty)$,
$$\frac{r(\mathcal{W}^{(q)}_{\alpha,p})_+^\prime(r)}{\mathcal{W}^{(q)}_{\alpha,p}(r)}=\frac{p+qr^\alpha}{\delta}+\nn\left[1-\exp\left(-\int_0^\chi (qr^\alpha e^{\alpha\xi_t}+p)\dd t\right)\mathbbm{1}_{\{-\underline{\xi}_\infty\leq \log(r)\}}\right],$$
while 
$$\frac{r(\mathcal{W}^{(q)}_{\alpha,p})_-^\prime(r)}{\mathcal{W}^{(q)}_{\alpha,p}(r)}=\frac{p+qr^\alpha}{\delta}+\nn\left[1-\exp\left(-\int_0^\chi (qr^\alpha e^{\alpha\xi_t}+p)\dd t\right)\mathbbm{1}_{\{-\underline{\xi}_\infty< \log(r)\}}\right]$$
(recall we interpret $\delta=\infty$ and hence $\frac{p+qr^\alpha}{\delta}=0$ when $X$ has paths of infinite variation).
\item\label{analytic:0+}  $\mathcal{W}^{(q)}_{\alpha,p,\infty}(y)\sim \Wp(y)\sim W(y)$ as $y\downarrow 0$. When $\alpha< 0$ and $q>0$, then $\WW(y) y^{-\Phi(p+q)}\sim \frac{\left(\sum_{k=0}^\infty \mathsf{a}_{\alpha,p}(\Phi(p+q),k)q^k\right)^2}{\sum_{k=0}^\infty \mathsf{a}_{\alpha,p}(\Phi(p+q),k)\left(\sum_{l=0}^k\frac{\psi'(\Phi(p+q)-l\alpha)}{\psi(\Phi(p+q)-l\alpha)-p}\right)q^k}\in (0,\infty)$ as $y\to \infty$.
 \end{enumerate}
Besides, $([0,\infty)\times [1,\infty)\ni (q,y)\mapsto \WW(y))$ is continuous. Finally, for each $y\in (1,\infty)$:
\begin{enumerate}[resume]
 \item\label{analytic:d} The function $([0,\infty)\ni q\mapsto \mathcal{W}^{(q)}_{\alpha,p}(y))$ extends to an entire function. 
\item\label{analytic:e} If $\Wp$ is continuously differentiable on $(0,\infty)$ (see \cite[Lemma~2.4]{kkr} for equivalent conditions), then for each $q\in [0,\infty)$ so is $\mathcal{W}^{(q)}_{\alpha,p}$, and $([0,\infty)\ni q\mapsto \mathcal{W}^{(q)\prime}_{\alpha,p}(y))$ extends to an entire function.
\end{enumerate}
\end{proposition}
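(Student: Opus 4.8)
The plan is to extract every assertion from the series representation of \ref{analytic:a}, with the convolution estimate \eqref{eq:estimate} as essentially the only quantitative input, and to invoke the (independently established, deferred) excursion identity of Lemma~\ref{lemma:fundamental} only where the explicit one-sided derivatives in \ref{analytic:c} require it. First \ref{analytic:a}: since $\Wp\ge 0$ and convolution against the nonnegative kernel $e^{\alpha\cdot}\mathbbm{1}_{[0,\infty)}$ is monotone, the Picard iterates $\mathcal{W}^{(q)}_{\alpha,p,n}$ increase in $n$; unwinding the affine recursion with the elementary identity $e^{\beta\cdot}(f\star g)=(e^{\beta\cdot}f)\star(e^{\beta\cdot}g)$ gives $\mathcal{W}^{(q)}_{\alpha,p,n}=\sum_{k=0}^n q^k[\star_{l=0}^k(\Wp e^{l\alpha\cdot})]$, so the limit is precisely the stated series. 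It is locally bounded, indeed locally uniformly in $(q,y)$: $[\star_{l=0}^k(\Wp e^{l\alpha\cdot})](x)\le e^{(\alpha\vee 0)kx}(\star^{k+1}\Wp)(x)$ and, on compacts, $(\star^{k+1}\Wp)(x)\le\frac{x^k}{k!}\Wp(x)^{k+1}$ (by monotonicity of $\Wp$, or by \eqref{eq:estimate} via \eqref{eq:Esscher-W}), so the $q$-weighted sum is dominated by $\Wp(x)\exp\!\big(q\,e^{(\alpha\vee 0)x}x\,\Wp(x)\big)$. Monotone convergence then lets the series inherit \eqref{eq:convolution:W}, and uniqueness (\cite[Lemma~2.1]{zbigniew-bo}) identifies it with $\mathcal{W}^{(q)}_{\alpha,p}\circ\exp$. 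Read as a power series in $q$ with nonnegative coefficients of order $(k!)^{-1}$, this same bound delivers at once the joint continuity of $([0,\infty)\times[1,\infty)\ni(q,y)\mapsto\WW(y))$ and the entire extension of \ref{analytic:d}.

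For \ref{analytic:b}, the inequality $\WW\le W^{(p+q)}\circ\log$ is immediate from the third equation of Remark~\ref{remark:equivalent-W} together with $e^{\alpha\cdot}-1\le 0$ on $[0,\infty)$ when $\alpha\le 0$ and $\WW\ge 0$; for the transform I would take Laplace transforms termwise in \ref{analytic:a} (Tonelli, all summands nonnegative), use $\widehat{\Wp e^{l\alpha\cdot}}(\theta)=\widehat{\Wp}(\theta-l\alpha)=(\psi(\theta-l\alpha)-p)^{-1}$ — valid, for $\alpha\le 0$, on $(\Phi(p),\infty)$ — and check that on $(\Phi(p+q),\infty)$ the resulting series converges, each factor being $\le(\psi(\theta)-p)^{-1}<q^{-1}$ there. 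Part \ref{analytic:c}: continuity on $[1,\infty)$ and strict monotonicity are read off the series — each $\star_{l=0}^k(\Wp e^{l\alpha\cdot})$ is continuous and, having the nondecreasing $\Wp$ among its convolution factors, nondecreasing; the convergence is locally uniform; and the $k=0$ summand $\Wp\circ\log$ is strictly increasing on $[1,\infty)$, whence $\WW(c)-\WW(r)\ge\Wp(\log c)-\Wp(\log r)>0$ for $c>r\ge 1$. In particular $\WW>0$ on $(1,\infty)$, so $\log\WW$ is meaningful.

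Rather than differentiate Lemma~\ref{lemma:fundamental} head-on, I would first observe that $\mathcal{W}^{(q)}_{\alpha,p,\infty}-\Wp=q\,\Wp\star(e^{\alpha\cdot}\mathcal{W}^{(q)}_{\alpha,p,\infty})$ is $C^1$ on $(0,\infty)$: $\Wp$ is continuous with $\Wp(0)$ finite and has locally bounded, one-sidedly continuous one-sided derivatives (\cite{kkr}), so convolving it with the continuous locally bounded function $e^{\alpha\cdot}\mathcal{W}^{(q)}_{\alpha,p,\infty}$ produces a continuously differentiable function, with derivative $\Wp(0)\,e^{\alpha\cdot}\mathcal{W}^{(q)}_{\alpha,p,\infty}+(\Wp)'\star(e^{\alpha\cdot}\mathcal{W}^{(q)}_{\alpha,p,\infty})$. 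Thus $\mathcal{W}^{(q)}_{\alpha,p,\infty}$ inherits verbatim the one-sided-differentiability structure of $\Wp$ — one-sided derivatives locally bounded and left/right-continuous, coinciding off the set where the L\'evy measure charges the relevant point and everywhere in the infinite-variation case — which transports to $(1,\infty)$ under the diffeomorphism $\log$. The explicit formulas for $(\WW)'_\pm$ then follow by differentiating in $c$ the identity of Lemma~\ref{lemma:fundamental}, whose right-hand side is a smooth term plus the integral of the $\nn$-functional $g\mapsto\nn[1-e^{-\int_0^\chi(qe^{\alpha g}e^{\alpha\xi_t}+p)\dd t}\mathbbm{1}_{\{-\underline{\xi}_\infty\le g\}}]$, the left-/right-derivative dichotomy being exactly the jump of this functional at the (at most countably many) $g$ with $\nn(-\underline{\xi}_\infty=g)>0$.

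Finally \ref{analytic:0+} and \ref{analytic:e}. Near $0$, $\Wp\star(e^{\alpha\cdot}\mathcal{W}^{(q)}_{\alpha,p,\infty})(y)\le\Wp(y)\cdot y\cdot\sup_{[0,y]}(e^{\alpha\cdot}\mathcal{W}^{(q)}_{\alpha,p,\infty})=o(\Wp(y))$ (monotonicity of $\Wp$, continuity at $0$), so $\mathcal{W}^{(q)}_{\alpha,p,\infty}(y)\sim\Wp(y)$ as $y\downarrow 0$, and $\Wp(y)\sim W(y)$ is the same statement for $\Wp=W+pW\star\Wp$ (or cite \cite{kkr}). The behaviour at $+\infty$ (for $\alpha<0$, $q>0$) I would obtain by a Karamata Tauberian theorem applied to the Laplace transform of \ref{analytic:b}: one locates its rightmost pole, evaluates the residue there from the explicit series, and uses that $\mathcal{W}^{(q)}_{\alpha,p,\infty}$ is nondecreasing to license the Tauberian inference. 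For \ref{analytic:e}, if $\Wp\in C^1(0,\infty)$ then the displayed derivative of $q\,\Wp\star(e^{\alpha\cdot}\mathcal{W}^{(q)}_{\alpha,p,\infty})$ is continuous, so $\mathcal{W}^{(q)}_{\alpha,p,\infty}\in C^1$; differentiating the series of \ref{analytic:a} termwise — legitimate because, $\Wp'$ now being locally bounded, the $k$-th term's derivative is again $O((k-1)!^{-1})$ locally uniformly — shows $q\mapsto(\WW)'(y)$ extends to an entire function. The one genuine obstacle is the $+\infty$ asymptotics: pinning down the leading pole and evaluating its residue in closed form — rather than contenting oneself with the crude bound $\WW\le W^{(p+q)}\circ\log$ — is the single place where real analytic work and a Tauberian theorem are unavoidable, with the careful one-sided-derivative bookkeeping of \ref{analytic:c}, resting on the deferred Lemma~\ref{lemma:fundamental} and on the regularity theory of $\Wp$, as the secondary technical point.
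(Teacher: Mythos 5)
Your treatment of items \ref{analytic:a}, \ref{analytic:b}, \ref{analytic:c}, the $y\downarrow 0$ part of \ref{analytic:0+}, and \ref{analytic:d}--\ref{analytic:e} is sound and follows essentially the same architecture as the paper: everything is read off the $q$-power series of \ref{analytic:a}, dominated locally uniformly via \eqref{eq:Esscher-W}--\eqref{eq:estimate}, with Lemma~\ref{lemma:fundamental} supplying the explicit one-sided derivatives. Your two mild deviations are both acceptable: for local boundedness in \ref{analytic:a} you use the explicit bound $\Wp(x)\exp(qe^{(\alpha\vee 0)x}x\Wp(x))$ where the paper instead shows inductively $\mathcal{W}^{(q)}_{\alpha,p,n}\leq \WW\circ\exp$; and for the qualitative differentiability in \ref{analytic:c} you argue that $\mathcal{W}^{(q)}_{\alpha,p,\infty}-\Wp$ is $C^1$ (which needs exactly the decomposition of $(\Wp)'$ into a locally bounded plus a monotone part that the paper records in a footnote to justify Leibniz' rule), where the paper derives the one-sided derivative structure from Lemma~\ref{lemma:fundamental} together with the considerations of \cite[Lemma~1(iii)]{doney-conditioning}.

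The genuine gap is the $y\to\infty$ asymptotics in \ref{analytic:0+}. Your Karamata plan fails on both counts. First, the Tauberian side condition you invoke is attached to the wrong function: monotonicity of $\mathcal{W}^{(q)}_{\alpha,p,\infty}$ controls behaviour at the abscissa of convergence of its Laplace transform, whereas the claimed asymptotic concerns the exponentially tilted function $\mathcal{W}^{(q)}_{\alpha,p,\infty}e^{-\Phi(p+q)\cdot}$, which is not monotone, so no standard Tauberian theorem applies to it. Second, and more seriously, there is no pole to take a residue at: for $\alpha<0$ the factors $\psi(\lambda-l\alpha)-p=\psi(\lambda+l|\alpha|)-p$ grow without bound in $l$ (since $\psi$ grows at least linearly), so each term of the series in \ref{analytic:b} decays super-geometrically in $k$ and the series converges for \emph{every} $\lambda>\Phi(p)$ — in particular it is finite and continuous at $\lambda=\Phi(p+q)$. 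The rightmost singularity of $\widehat{\mathcal{W}^{(q)}_{\alpha,p,\infty}}$ sits at $\Phi(p)$, not $\Phi(p+q)$, so "locate the rightmost pole and evaluate the residue" cannot be executed as described. The paper's route is different and avoids Tauberian theory altogether: it first proves that $\lim_\infty \mathcal{W}^{(q)}_{\alpha,p,\infty}e^{-\Phi(p+q)\cdot}$ \emph{exists}, using the representation $\mathcal{W}=W^{(p+q)}+qW^{(p+q)}\star((e^{\alpha\cdot}-1)\mathbbm{1}_{[0,\infty)}\mathcal{W})$ of Remark~\ref{remark:equivalent-W} (the convolution term has a fixed sign for $\alpha\leq 0$, and $W^{(p+q)}e^{-\Phi(p+q)\cdot}=W_{\Phi(p+q)}$ is bounded and monotone, so monotone convergence applies), and only then identifies the value by the elementary \emph{Abelian} fact $\lambda\hat g(\lambda)\to\lim_\infty g$ for bounded $g$ with a limit. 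You should reconstruct that two-step argument. Be warned, however, that carrying it out honestly runs into the finiteness just noted: with $\hat g(\lambda)=\widehat{\mathcal{W}^{(q)}_{\alpha,p,\infty}}(\Phi(p+q)+\lambda)$ finite at $\lambda=0$, the Abelian limit $\lim_{\lambda\downarrow 0}\lambda\hat g(\lambda)$ is $0$, whereas the displayed constant is obtained in the paper by applying l'H\^ospital to $\lambda/F(\lambda)^{-1}$ at a point where $F(0)<\infty$, i.e.\ at a non-indeterminate form; so this sub-item deserves re-examination rather than a proof by a different method.
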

\begin{remark}
For $\alpha>0$, $\widehat{\mathcal{W}^{(q)}_{\alpha,p,\infty}}$ is finite on no neighborhood of $\infty$. Indeed, from \eqref{analytic:a}, irrespective of the sign of $\alpha$, $\WW$ expands into a nonnegative function power-series in $q$, the function-coefficients of which have the following Laplace transforms: for $k\in \mathbb{N}_0$, $(\star_{l=0}^k(\Wp e^{l\alpha \cdot}))^\wedge=\mathsf{a}_{\alpha,p}(\cdot,k)<\infty$ on $(\Phi(p)+k(\alpha\lor 0),\infty)$ and $(\star_{l=0}^k(\Wp e^{l\alpha \cdot}))^\wedge=\infty$ off this set. In particular it is certainly not the case that the asymptotics at $\infty$ of \eqref{analytic:0+} extends to the case $\alpha\geq 0$ (though, for $\alpha=0$ or $q=0$, the asymptotics is that of the scale functions of $X$, viz. Remark~\ref{remark:special}).
\end{remark}
\begin{remark}
From the series representation in \eqref{analytic:a} it is clear that the computation of $\WW$ boils down to algebraic  manipulations whenever $\Wp$ is a mixture of exponentials. This is for instance the case for Brownian motion with drift and exponential jumps (except for special constellations of the parameters) \cite[Eq.~(7)]{kkr}. 
\end{remark}
\begin{proof}
\eqref{analytic:a}. By monotone convergence $\mathcal{W}^{(q)}_{\alpha,p,\infty}$ satisfies the defining convolution equation \eqref{eq:convolution:W}. Furthermore, it is proved by induction that $\mathcal{W}^{(q)}_{\alpha,p,n}\leq \mathcal{W}^{(q)}_{\alpha,p}\circ e^\cdot$ for all $n\in\mathbb{N}_0$, and hence $0\leq \mathcal{W}^{(q)}_{\alpha,p,\infty}\leq \mathcal{W}^{(q)}_{\alpha,p}\circ e^\cdot$. In consequence $\mathcal{W}^{(q)}_{\alpha,p,\infty}$ is locally bounded and Borel measurable, so that by the uniqueness of the solution to \eqref{eq:convolution:W} we must have $\mathcal{W}^{(q)}_{\alpha,p}=\mathcal{W}^{(q)}_{\alpha,p,\infty}\circ\log$. 

\eqref{analytic:b}. When $\alpha\leq0$, then $\mathcal{W}^{(q)}_{\alpha,n+1}=\Wp+q\Wp\star (e^{\alpha\cdot}\mathcal{W}^{(q)}_{\alpha,p,n})$ implies $\widehat{\mathcal{W}^{(q)}_{\alpha,p,n+1}}=(\psi-p)^{-1}(1+q\widehat{\mathcal{W}^{(q)}_{\alpha,p,n}}(\cdot-\alpha))$ on $(\Phi(p),\infty)$, which together with $\widehat{\mathcal{W}^{(q)}_{\alpha,p,0}}=(\psi-p)^{-1}$ and $\widehat{\mathcal{W}^{(q)}_{\alpha,p,\infty}}=\lim_{n\to\infty}\widehat{\mathcal{W}^{(q)}_{\alpha,p,n}}$ (the latter being a consequence of monotone convergence) entails $\widehat{\mathcal{W}^{(q)}_{\alpha,p,\infty}}=\sum_{k=0}^\infty \mathsf{a}_{\alpha,p}(\cdot,k)q^{k}$ on $(\Phi(p),\infty)$. The finiteness comes from the observation that $\WW\leq W^{(p+q)}\circ \log$, which is an immediate consequence of Remark~\ref{remark:equivalent-W}. 

\eqref{analytic:c}. By an expansion into a $q$-series from \eqref{analytic:a} and from the corresponding properties of $\Wp$, clearly $\mathcal{W}^{(q)}_{\alpha,p,\infty}$ is strictly increasing, while dominated convergence using \eqref{eq:Esscher-W}-\eqref{eq:estimate} implies that $\mathcal{W}^{(q)}_{\alpha,p,\infty}$ is continuous on $[0,\infty)$ (the same argument gives the joint continuity from the statement immediately following \eqref{analytic:c}). The final part of this statement follows from Lemma~\ref{lemma:fundamental} and from the considerations of the proof of \cite[Lemma~1(iii)]{doney-conditioning}.

In a similar vein, by dominated convergence for the series in $q$, the first part of \eqref{analytic:0+} is got (the asymptotic equivalence of $\Wp$ and $W$ at $0+$ follows itself from  \cite[Lemma~3.6, Eqs.~(55) \& (56)]{kkr} and dominated convergence). 

Furthermore, when $\alpha< 0$ and $q>0$, it follows from Remark~\ref{remark:equivalent-W}, monotone convergence and from the boundedness and monotonicity of $W^{(p+q)}e^{-\Phi(p+q)\cdot}=W_{\Phi(p+q)}$ that $\lim_\infty\mathcal{W}^{(q)}_{\alpha,p,\infty} e^{-\Phi(p+q)\cdot}$ exists in $[0,\infty)$, in which case \eqref{analytic:b}  renders $\lim_\infty \mathcal{W}^{(q)}_{\alpha,p,\infty} e^{-\Phi(p+q)\cdot}=\lim_{\lambda\downarrow 0}\lambda \sum_{k=0}^\infty \mathsf{a}_{\alpha,p}(\Phi(p+q)+\lambda,k)q^{k}$ (we have used here the following: assuming $f\in \mathcal{B}_\mathbb{R}/\mathcal{B}_{[0,\infty)}$, vanishing on $(-\infty,0)$, is bounded and $\lim_\infty f$ exists, then one may write, by bounded convergence, $\lambda\hat{f}=\mathsf{R}[f(e_1/\lambda)]\to \lim_\infty f$ as $\lambda\downarrow 0$ for $e_1\sim_\mathsf{R}\mathrm{Exp}(1)$). But  $$\lim_{\lambda\downarrow 0}\lambda \sum_{k=0}^\infty \mathsf{a}_{\alpha,p}(\Phi(p+q)+\lambda,k)q^{k}=\frac{\left(\sum_{k=0}^\infty \mathsf{a}_{\alpha,p}(\Phi(p+q),k)q^k\right)^2}{\sum_{k=0}^\infty \mathsf{a}_{\alpha,p}(\Phi(p+q),k)\left(\sum_{l=0}^k\frac{\psi'(\Phi(p+q)-l\alpha)}{\psi(\Phi(p+q)-l\alpha)-p}\right)q^k}\in (0,\infty),$$ by l'H\^ospital's rule: indeed since (as is easy to check) $\psi'/\psi$ is bounded on $[c,\infty)$ for any $c\in (\Phi(0),\infty)$ and since $\psi$ grows ultimately at least linearly, differentiation under the summation sign can be justified by the fact that the resulting series converges absolutely locally uniformly in $\lambda\in (0,\infty)$, and then the limit as $\lambda\downarrow 0$ can be taken via dominated convergence.

\eqref{analytic:d} and \eqref{analytic:e}. The series in $q$ got in \eqref{analytic:a} converges for all $q\in [0,\infty)$ and it has nonnegative coefficients; it is immediate that it extends to an entire function. Furthermore, if $\Wp$ is of class $C^1$ on $(0,\infty)$, then this series may be differentiated term-by-term, because -- differentiation under the sum -- the resulting differentiated series can be dominated locally uniformly in $q\in [0,\infty)$ by a summable series on account of \eqref{eq:Esscher-W}-\eqref{eq:estimate}.  In particular the resulting derivative is continuous and extends to an entire function in $q$. (In fact some non-trivial care is needed even for the differentiation of the individual terms in the series from  \eqref{analytic:a}: they are, to be differentiated in $x\in (0,\infty)$, of the form $\int_0^xg(y)\Wp(x-y)dy$ for a continuous $g:[0,\infty)\to [0,\infty)$. It is then an exercise in real analysis to convince oneself that the usual Leibniz' rule for differentiation under the integral sign/in the delimiters applies here. A useful property to be used to this end is that the derivative of $W^{(p)}$ on $(0,\infty)$ decomposes into a continuous part that is bounded on each bounded interval and  a (therefore necessarily integrable on each bounded interval) continuous nonincreasing part. The latter  is seen to hold true from \eqref{analytic:c}. We omit further details.)
\end{proof}
Besides $(\Wq)_{q\in [0,\infty)}$ one has in the theory of snLp the ``adjoint'' scale functions $(Z^{(q,\theta)})_{(q,\theta)\in [0,\infty)^2}$, which are also very convenient in organizing fluctuation results. To wit, for $\{q,\theta\}\subset [0,\infty)$, \cite[Eq.~(5.17)]{app}
$$\Zqt:=e^{\theta \cdot}+(q-\psi(\theta))(e^{\theta\cdot}\mathbbm{1}_{[0,\infty)})\star \Wq.$$
In particular we write $Z^{(q)}:=Z^{(q,0)}$, $q\in [0,\infty)$. It is easy to check the following for $\{q,\theta\}\subset [0,\infty)$: one has the Laplace transform
$$(\Zqt\mathbbm{1}_{[0,\infty)})^\wedge (\lambda)=\frac{\psi(\lambda)-\psi(\theta)}{(\lambda-\theta)(\psi(\lambda)-q)},\quad \lambda\in (\Phi(q)\lor\theta,\infty);$$  $\Zqt$ is $(0,\infty)$-valued;  $\Zqt(x)=e^{\theta x}$ for $x\in (-\infty,0]$.

The analogues of these functions in the context of pssMp are given by

\begin{definition}
For $\{q,\theta\}\subset [0,\infty)$, let   $\mathcal{Z}^{(q,\theta)}_{\alpha,p}:=\mathcal{Z}\circ \log$ for \cite[Lemma~2.1]{zbigniew-bo} the unique locally bounded Borel measurable  $\mathcal{Z}:\mathbb{R}\to \mathbb{R}$ satisfying the convolution equation 
\begin{equation}\label{eq:convolution:Z}
\mathcal{Z}=Z^{(p,\theta)}+q\Wp\star(e^{\alpha\cdot} \mathbbm{1}_{[0,\infty)}\mathcal{Z}).
\end{equation}
We write $\ZZq:=\mathcal{Z}^{(q,0)}_{\alpha,p}$ for short.
\end{definition}
\begin{remark}
The definition entails that $\mathcal{Z}^{(q,\theta)}_{\alpha,p}(y)=y^\theta$ for $y\in (0,1]$.
\end{remark}

\begin{remark}\label{remark:equivalent-Z}
Exploiting the relation $Z^{(p,\theta)}=Z^{(0,\theta)}+pW^{(p)}\star Z^{(0,\theta)}$, that may be checked via Laplace transforms, the convolution equation \eqref{eq:convolution:Z} is seen to be equivalent  \cite[Lemma~2.1]{zbigniew-bo} to each of
$$\mathcal{Z}=Z^{(0,\theta)}+W\star((qe^{\alpha\cdot} +p) \mathbbm{1}_{[0,\infty)}\mathcal{Z})$$
and
$$\mathcal{Z}=Z^{(q+p,\theta)}+qW^{(q+p)}\star((e^{\alpha\cdot} -1) \mathbbm{1}_{[0,\infty)}\mathcal{Z}).$$
\end{remark}

\begin{remark}
Two special cases: when (i) $\alpha=0$, then $\mathcal{Z}^{(q,\theta)}_{\alpha,p}=Z^{(q+p,\theta)}\circ \log$ and (ii) when $q=0$, then $\mathcal{Z}^{(q,\theta)}_{\alpha,p}=Z^{(p,\theta)}\circ \log$.
\end{remark}
Parallel to Proposition~\ref{proposition:Esscher:W} we have 
\begin{proposition}\label{proposition:Esscher:Z}
For $\theta\in [0,\Phi(p)]$, $q\in[0,\infty)$, one has $\mathcal{Z}_{\alpha,p}^{(q,\theta)}(y)={_\theta\mathcal{Z}}_{\alpha,p-\psi(\theta)}^{(q)}(y)y^\theta$, $y\in (0,\infty)$, where the left subscript $\theta$ indicates that the quantity is to be computed for the process $X$ under $\PP^\theta$. 
\end{proposition}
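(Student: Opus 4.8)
The plan is to imitate the proof of Proposition~\ref{proposition:Esscher:W} verbatim, replacing the $\mathcal{W}$-convolution equation with the $\mathcal{Z}$-convolution equation. First I would start from the last display in Remark~\ref{remark:equivalent-Z}, which tells us that ${_\theta\mathcal{Z}}_{\alpha,p-\psi(\theta)}^{(q)}=\mathcal{Z}\circ\log$ for the unique locally bounded Borel measurable $\mathcal{Z}:\mathbb{R}\to\mathbb{R}$ solving
$$\mathcal{Z}={_\theta Z}^{(q+p-\psi(\theta),\theta')}+q\, {_\theta W}^{(q+p-\psi(\theta))}\star((e^{\alpha\cdot}-1)\mathbbm{1}_{[0,\infty)}\mathcal{Z})$$
for the appropriate choice $\theta'=0$ (since the left-hand object carries no second parameter), where the subscript $\theta$ again signals that the scale functions are those of $X$ under $\PP^\theta$. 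The key arithmetic inputs are the two Esscher relations for scale functions under $\PP^\theta$: $W^{(q+p)}=e^{\theta\cdot}\,{_\theta W}^{(q+p-\psi(\theta))}$ (cited from \cite[Eq.~(8.30)]{kyprianou}, already used in the proof of Proposition~\ref{proposition:Esscher:W}) together with the analogous one for the $Z$-functions, namely $Z^{(q+p,\theta)}=e^{\theta\cdot}\,{_\theta Z}^{(q+p-\psi(\theta),0)}={_\theta Z}^{(q+p-\psi(\theta))}e^{\theta\cdot}$, which is readily checked from the definition $Z^{(q,\theta)}=e^{\theta\cdot}+(q-\psi(\theta))(e^{\theta\cdot}\mathbbm{1}_{[0,\infty)})\star W^{(q)}$ or from its Laplace transform.

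Next I would multiply the candidate equation through by $e^{\theta\cdot}$ and use these two identities to rewrite it. Setting $g:=e^{\theta\cdot}\mathcal{Z}$, the term $e^{\theta\cdot}\,{_\theta Z}^{(q+p-\psi(\theta))}$ becomes $Z^{(q+p,\theta)}$; and for the convolution term one uses the elementary identity $e^{\theta\cdot}\bigl(f\star(h\,k)\bigr)=(e^{\theta\cdot}f)\star\bigl(e^{\theta\cdot}(hk)\bigr)$ together with $e^{\theta\cdot}\,{_\theta W}^{(q+p-\psi(\theta))}=W^{(q+p)}$ to turn it into $q\,W^{(q+p)}\star\bigl((e^{\alpha\cdot}-1)\mathbbm{1}_{[0,\infty)}g\bigr)$ — here the factor $(e^{\alpha\cdot}-1)$ is unchanged because it is what multiplies $\mathcal{Z}$, not what gets rescaled, and the indicator $\mathbbm{1}_{[0,\infty)}$ is unaffected by multiplication with the strictly positive $e^{\theta\cdot}$. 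Hence $g=e^{\theta\cdot}\mathcal{Z}$ satisfies exactly the defining convolution equation (third form in Remark~\ref{remark:equivalent-Z}) for $\mathcal{Z}^{(q,\theta)}_{\alpha,p}\circ e^\cdot$. Since $g$ is locally bounded and Borel measurable (being the product of a locally bounded Borel function with a continuous one), uniqueness of the solution forces $\mathcal{Z}^{(q,\theta)}_{\alpha,p}\circ e^\cdot=e^{\theta\cdot}\bigl({_\theta\mathcal{Z}}_{\alpha,p-\psi(\theta)}^{(q)}\circ e^\cdot\bigr)$, i.e. $\mathcal{Z}_{\alpha,p}^{(q,\theta)}(y)={_\theta\mathcal{Z}}_{\alpha,p-\psi(\theta)}^{(q)}(y)\,y^\theta$ for $y\in(0,\infty)$, as claimed.

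The main obstacle — really the only subtlety — is verifying the $Z$-analogue of the Esscher relation $Z^{(q+p,\theta)}=e^{\theta\cdot}\,{_\theta Z}^{(q+p-\psi(\theta))}$, since unlike the $W$-version it is not literally among the equations displayed in the excerpt; but it follows in one line by comparing Laplace transforms on a common half-line, using the stated formula $(\Zqt\mathbbm{1}_{[0,\infty)})^\wedge(\lambda)=\frac{\psi(\lambda)-\psi(\theta)}{(\lambda-\theta)(\psi(\lambda)-q)}$, the fact that the Laplace exponent of $X$ under $\PP^\theta$ is $\psi_\theta(\cdot)=\psi(\cdot+\theta)-\psi(\theta)$, and the shift rule $(e^{\theta\cdot}h)^\wedge(\lambda)=\hat h(\lambda-\theta)$; one also checks the two sides agree on $(-\infty,0]$, where both equal $e^{\theta\cdot}$. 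Beyond that, one should take the usual small care that $\theta\le\Phi(p)$ guarantees $p-\psi(\theta)\ge 0$ so that the object ${_\theta\mathcal{Z}}_{\alpha,p-\psi(\theta)}^{(q)}$ is well-defined, exactly as in Proposition~\ref{proposition:Esscher:W}. The rest is bookkeeping with convolutions and exponential weights, entirely parallel to the $\mathcal{W}$ case.
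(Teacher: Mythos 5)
Your proof is correct and follows essentially the same route as the paper's: the paper's proof is exactly the verbatim adaptation of Proposition~\ref{proposition:Esscher:W}, using Remark~\ref{remark:equivalent-Z} together with the identity $Z^{(q+p,\theta)}=e^{\theta\cdot}Z^{(q+p-\psi(\theta))}_\theta$, which is precisely your argument. Your extra one-line Laplace-transform verification of that $Z$-Esscher identity (which the paper simply invokes) is accurate and a welcome bit of added completeness.
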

\begin{proof}
The proof is essentially verbatim that of Proposition~\ref{proposition:Esscher:W}, except that now one exploits the identity $Z^{(q+p,\theta)}=e^{\theta\cdot} Z^{(q+p-\psi(\theta))}_\theta$ and Remark~\ref{remark:equivalent-Z}.
\end{proof}
We establish also the basic analytical properties of the family $(\mathcal{Z}_{\alpha,p}^{(q,\theta)})_{(q,\theta)\in [0,\infty)^2}$.
\begin{proposition}\label{proposition:analytic:Z}
For $\{q,\theta\}\subset [0,\infty)$:
\begin{enumerate}
\item\label{analytic:Z:recursion} One has $\mathcal{Z}^{(q,\theta)}_{\alpha,p}= \mathcal{Z}^{(q,\theta)}_{\alpha,p,\infty}\circ \log$, where $ \mathcal{Z}^{(q,\theta)}_{\alpha,p,\infty}:=\uparrow \!\!\!\!\text{-}\lim_{n\to\infty}  \mathcal{Z}^{(q,\theta)}_{\alpha,p,n}$, with $ \mathcal{Z}^{(q,\theta)}_{\alpha,p,0}:=Z^{(p,\theta)}$ and then inductively $ \mathcal{Z}^{(q,\theta)}_{\alpha,p,n+1}:=Z^{(p,\theta)}+q\Wp\star (e^{\alpha \cdot} \mathcal{Z}^{(q,\theta)}_{\alpha,p,n}\mathbbm{1}_{[0,\infty)})$ for $n\in \mathbb{N}_0$, in other words $ \mathcal{Z}^{(q,\theta)}_{\alpha,p,\infty}=Z^{(p,\theta)}+\sum_{k=1}^\infty q^k[\star_{l=0}^{k-1}(\Wp e^{l\alpha\cdot })]\star (e^{k\alpha  \cdot}Z^{(p,\theta)}\mathbbm{1}_{[0,\infty)})$.
\item\label{analytic:Z:laplace} When $\alpha\leq 0$, then $\ZZ\leq Z^{(p+q,\theta)}\circ\log$ and $(\mathcal{Z}^{(q,\theta)}_{\alpha,p,\infty}\mathbbm{1}_{[0,\infty)})^\wedge=\sum_{k=0}^\infty \mathsf{a}_{\alpha,p}(\cdot,k)\frac{\psi(\cdot-k\alpha)-\psi(\theta)}{(\cdot-k\alpha-\theta)}q^k<\infty$ on $(\Phi(p+q)\lor \theta,\infty)$.
\item\label{analytic:Z:ct-diif} $\mathcal{Z}^{(q,\theta)}_{\alpha,p}$ is continuous and $\mathcal{Z}^{(q,\theta)}_{\alpha,p}\vert_{[1,\infty)}$ is continuously differentiable. If $W^{(p)}$ is of class $C^1$ on $(0,\infty)$ then $\mathcal{Z}^{(q,\theta)}_{\alpha,p}\vert_{(1,\infty)}$ is twice continuously differentiable.
 \end{enumerate}
Besides, for each $\theta\in [0,\infty)$, $([0,\infty)\times [1,\infty)\ni (q,y)\mapsto \ZZ(y))$ is continuous. Finally, for each $y\in [1,\infty)$ and $\theta\in [0,\infty)$: 
\begin{enumerate}[resume]
 \item\label{analytic:Z:entire} The functions $([0,\infty)\ni q\mapsto \mathcal{Z}^{(q,\theta)}_{\alpha,p}(y))$ and $([0,\infty)\ni q\mapsto \mathcal{Z}^{(q,\theta)\prime}_{\alpha,p}(y))$ extend to entire functions. 
\end{enumerate}
\end{proposition}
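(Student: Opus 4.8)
The plan is to mirror, almost line for line, the proof of Proposition~\ref{proposition:analytic}, exploiting the fact that $\mathcal{Z}^{(q,\theta)}_{\alpha,p}$ solves a convolution equation of exactly the same shape as $\mathcal{W}^{(q)}_{\alpha,p}$, only with $Z^{(p,\theta)}$ (resp. $Z^{(0,\theta)}$, $Z^{(q+p,\theta)}$) playing the role that $W^{(p)}$ (resp. $W$, $W^{(q+p)}$) played there, together with the three equivalent forms recorded in Remark~\ref{remark:equivalent-Z}. For part~\eqref{analytic:Z:recursion} I would first check, by monotone convergence, that $\mathcal{Z}^{(q,\theta)}_{\alpha,p,\infty}$ satisfies the defining equation \eqref{eq:convolution:Z}, then prove by induction that $\mathcal{Z}^{(q,\theta)}_{\alpha,p,n}\le \mathcal{Z}^{(q,\theta)}_{\alpha,p}\circ e^{\cdot}$ for every $n$ (using nonnegativity of $W^{(p)}$ and of $Z^{(p,\theta)}$), so that the monotone limit is locally bounded and Borel; uniqueness in \cite[Lemma~2.1]{zbigniew-bo} then forces $\mathcal{Z}^{(q,\theta)}_{\alpha,p}=\mathcal{Z}^{(q,\theta)}_{\alpha,p,\infty}\circ\log$. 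The closed-form series in $q$ is obtained by unrolling the recursion, exactly as for $\mathcal{W}$; note the asymmetry of the summand (the factor $e^{k\alpha\cdot}Z^{(p,\theta)}\mathbbm{1}_{[0,\infty)}$ at the end rather than another $W^{(p)}e^{k\alpha\cdot}$) comes from $Z^{(p,\theta)}$, not $W^{(p)}$, being the $n=0$ term.

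For part~\eqref{analytic:Z:laplace}: when $\alpha\le 0$ the bound $\ZZ\le Z^{(p+q,\theta)}\circ\log$ is immediate from the third equivalent form in Remark~\ref{remark:equivalent-Z} (the convolution term there is nonnegative since $e^{\alpha\cdot}-1\le 0$ is being multiplied against... — more precisely one runs the induction $\mathcal{Z}^{(q,\theta)}_{\alpha,p,n}\le Z^{(p+q,\theta)}$ using $Z^{(p,\theta)}\le Z^{(p+q,\theta)}$ and $q W^{(p)}\star(e^{\alpha\cdot}Z^{(p+q,\theta)}\mathbbm{1}_{[0,\infty)})\le qW^{(p)}\star Z^{(p+q,\theta)} = (Z^{(p+q,\theta)}-Z^{(p,\theta)})$ via the same Laplace-transform identity quoted in Remark~\ref{remark:equivalent-Z}). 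The Laplace transform of $\mathcal{Z}^{(q,\theta)}_{\alpha,p,\infty}\mathbbm{1}_{[0,\infty)}$ is then read off term by term from the series in \eqref{analytic:Z:recursion}: each summand is a convolution of $k$ copies of $W^{(p)}e^{l\alpha\cdot}$ ($l=0,\dots,k-1$) with $e^{k\alpha\cdot}Z^{(p,\theta)}\mathbbm{1}_{[0,\infty)}$, whose transforms are $(\psi(\cdot-l\alpha)-p)^{-1}$ and $\frac{\psi(\cdot-k\alpha)-\psi(\theta)}{(\cdot-k\alpha-\theta)(\psi(\cdot-k\alpha)-p)}$ respectively (shifting the variable in the known transform of $Z^{(p,\theta)}\mathbbm{1}_{[0,\infty)}$); multiplying and summing, then invoking monotone convergence to exchange $\sum$ and $\int$, gives the stated formula, finite on $(\Phi(p+q)\lor\theta,\infty)$ by the domination just established.

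For part~\eqref{analytic:Z:ct-diif}, the joint continuity, and the entireness in part~\eqref{analytic:Z:entire}, the arguments are verbatim those of Proposition~\ref{proposition:analytic}\eqref{analytic:c},\eqref{analytic:d},\eqref{analytic:e} applied to the $q$-series of \eqref{analytic:Z:recursion}: dominated convergence using the estimates \eqref{eq:Esscher-W}--\eqref{eq:estimate} (the extra factor $e^{k\alpha\cdot}Z^{(p,\theta)}$ is harmless — on any compact $[1,C]$ it is bounded, and $Z^{(p,\theta)}$ is continuous, indeed $C^1$, with a derivative controlled by $W^{(p)}$ via $Z^{(q,\theta)\prime}=\theta Z^{(q,\theta)}+(q-\psi(\theta))e^{\theta\cdot}W^{(q)}$) yields continuity of each coefficient and local-uniform-in-$q$ summability, hence continuity of the sum, joint continuity, and — since the series has nonnegative coefficients and converges for all $q\in[0,\infty)$ — extension to an entire function of $q$. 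The continuous differentiability of $\mathcal{Z}^{(q,\theta)}_{\alpha,p}|_{[1,\infty)}$ follows because $Z^{(p,\theta)}$ is itself $C^1$, so differentiating the convolution equation \eqref{eq:convolution:Z} in the delimiters and integrand (legitimate by the same Leibniz-rule discussion as in the footnote to Proposition~\ref{proposition:analytic}\eqref{analytic:e}) expresses the derivative through $W^{(p)}$ and $Z^{(p,\theta)\prime}$, both bounded on compacts of $(0,\infty)$; the resulting differentiated series is again term-by-term differentiable and dominated locally uniformly in $q$, giving the entireness of $q\mapsto\mathcal{Z}^{(q,\theta)\prime}_{\alpha,p}(y)$. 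If in addition $W^{(p)}\in C^1$, one more differentiation of the convolution equation (now $W^{(p)\prime}$ enters, which by Proposition~\ref{proposition:analytic}\eqref{analytic:c} splits into a locally bounded part and a monotone, hence locally integrable, part) gives the asserted twice continuous differentiability on $(1,\infty)$. The main obstacle — the only place requiring genuine care rather than transcription — is handling the boundary term in differentiating the convolutions $\int_0^x g(y)W^{(p)}(x-y)\,dy$ at the left endpoint and justifying differentiation under the summation sign uniformly in $q$; but this is precisely the issue already resolved for $\mathcal{W}$, and the presence of the bounded, smooth factor $e^{k\alpha\cdot}Z^{(p,\theta)}$ does not change the analysis.
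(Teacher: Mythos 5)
Your proposal takes essentially the same route as the paper: part \eqref{analytic:Z:recursion} verbatim as Proposition~\ref{proposition:analytic}\eqref{analytic:a} (monotone convergence, induction bound, uniqueness), part \eqref{analytic:Z:laplace} by Laplace-transforming the recursion/series together with the domination supplied by Remark~\ref{remark:equivalent-Z}, and parts \eqref{analytic:Z:ct-diif}--\eqref{analytic:Z:entire} plus the joint continuity via the $q$-series, the estimates \eqref{eq:Esscher-W}--\eqref{eq:estimate}, and the continuity and continuous differentiability of $Z^{(p,\theta)}$ on $[0,\infty)$. One minor slip that does not affect the argument: the derivative identity should read $Z^{(q,\theta)\prime}=\theta Z^{(q,\theta)}+(q-\psi(\theta))W^{(q)}$ on $[0,\infty)$, without the extra factor $e^{\theta\cdot}$.
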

\begin{remark}
For $\alpha>0$,  $(\mathcal{Z}^{(q,\theta)}_{\alpha,p,\infty}\mathbbm{1}_{[0,\infty)})^\wedge$ is finite on no neighborhood of $\infty$. Indeed, from \eqref{analytic:Z:recursion}, irrespective of the sign of $\alpha$, $\ZZ$ expands into a nonnegative function power-series in $q$, the function-coefficients of which have the Laplace transforms given as follows: for $k\in \mathbb{N}_0$, $((\star_{l=0}^{k-1}\Wp e^{l\alpha\cdot })\star (e^{k\alpha  \cdot}Z^{(p,\theta)}\mathbbm{1}_{[0,\infty)}))^\wedge=\mathsf{a}_{\alpha,p}(\cdot,k)\frac{\psi(\cdot-k\alpha)-\psi(\theta)}{(\cdot-k\alpha-\theta)}<\infty$ on $(\Phi(p)\lor\theta+k(\alpha\lor 0),\infty)$, while $((\star_{l=0}^{k-1}\Wp e^{l\alpha\cdot })\star (e^{k\alpha  \cdot}Z^{(p,\theta)}\mathbbm{1}_{[0,\infty)}))^\wedge=\infty$ off this set.
\end{remark}
\begin{remark}
Combining Proposition~\ref{proposition:analytic}\eqref{analytic:a} and Proposition~\ref{proposition:analytic:Z}\eqref{analytic:Z:recursion} we see that $\ZZ=Z^{(p,\theta)}+q\sum_{k=0}^\infty q^k \mathcal{W}_{\alpha,p}^{(q)_k}\star [e^{\alpha(k+1)\cdot} Z^{(p,\theta)}\mathbbm{1}_{[0,\infty)}]$, where for $n\in \mathbb{N}_0$, $\mathcal{W}_{\alpha,p}^{(q)_n}$ is the function-coefficient at $q^n$ in the expansion of $\WW$ into a $q$-power series. When $\alpha=0$ (but not otherwise) this of course simplifies to $\ZZ=Z^{(p,\theta)}+q\WW\star (Z^{(p,\theta)}\mathbbm{1}_{[0,\infty)})$. In any event, however, in terms of numerics,  if $\WW$ has been determined by expansion into a $q$-series, then the function-coefficients of the expansion of $\ZZ$ into a $q$-series are ``only another convolution away''. These are only very superficial comments, though, and an investigation of the numerical evaluation of  scale functions for spectrally one-sided pssMp is left to be pursued elsewhere.
\end{remark} 
\begin{proof}
The proof of \eqref{analytic:Z:recursion} is essentially verbatim that of Proposition~\ref{proposition:analytic}\eqref{analytic:a}.

\eqref{analytic:Z:laplace}. When $\alpha\leq 0$, taking Laplace transforms in $\mathcal{Z}^{(q,\theta)}_{\alpha,p,n+1}=Z^{(p,\theta)}+q\Wp\star(e^{\alpha\cdot} \mathbbm{1}_{[0,\infty)}\mathcal{Z}^{(q,\theta)}_{\alpha,p,n})$ yields $(\mathcal{Z}^{(q,\theta)}_{\alpha,p,n+1}\mathbbm{1}_{[0,\infty)})^\wedge(\lambda)=\frac{\psi(\lambda)-\psi(\theta)}{(\lambda-\theta)(\psi(\lambda)-p)}+\frac{q}{\psi(\lambda)-p}(\mathcal{Z}^{(q,\theta)}_{\alpha,p,n}\mathbbm{1}_{[0,\infty)})^\wedge(\lambda-\alpha)$ for $\lambda\in (\Phi(p+q)\lor\theta,\infty)$, which together with $(\mathcal{Z}^{(q,\theta)}_{\alpha,p,0}\mathbbm{1}_{[0,\infty)})^\wedge(\lambda)=\frac{\psi(\lambda)-\psi(\theta)}{(\lambda-\theta)(\psi(\lambda)-p)}$ produces $(\mathcal{Z}^{(q,\theta)}_{\alpha,p,\infty}\mathbbm{1}_{[0,\infty)})^\wedge(\lambda)=\sum_{k=0}^\infty q^k\frac{\psi(\lambda-k\alpha)-\psi(\theta)}{(\lambda-k\alpha-\theta)(\psi(\lambda-k\alpha)-p)}\left(\prod_{l=0}^{k-1}(\psi(\lambda-l\alpha)-p)\right)^{-1}$. The starting estimate and finiteness property follow from Remark~\ref{remark:equivalent-Z}. 

\eqref{analytic:Z:ct-diif} and \eqref{analytic:Z:entire}. From its definition, $Z^{(p,\theta)}$ is continuous and $Z^{(p,\theta)}\vert_{[0,\infty)}$ is continuously differentiable. The claims of these two items, together with the statement immediately following \eqref{analytic:Z:ct-diif}, then follow via the series in $q$ of \eqref{analytic:Z:recursion}, using \eqref{eq:Esscher-W}-\eqref{eq:estimate}.
\end{proof}
Finally, we would be remiss not to point out that there is another set of scale functions pertaining to the snLp $X$, namely the exponential family $(e^{\Phi(q)\cdot})_{q\in [0,\infty)}$ associated to first passage upwards. Their analogues for pssMp are provided by Patie's \cite{pierre} scale functions, when $\alpha>0$:
\begin{definition}
Let $\alpha>0$. For $q\in [0,\infty)$, set $\II_{\alpha,p}^{(q)}:=\II\circ \log$ for the unique \cite[Theorem~3.1]{vidmar} Borel measurable locally bounded $\II:\mathbb{R}\to \mathbb{R}$ with a left-tail that is $\Phi(p)$-subexponential \cite[Definition~2.1]{vidmar} and that satisfies the convolution equation
$$\II=e^{\Phi(p)\cdot}+q\Wp\star (e^{\alpha \cdot}\II),$$
i.e.  $\II_{\alpha,p}^{(q)}(y)=y^{\Phi(p)}\sum_{k=0}^\infty\frac{(qy^{\alpha})^k}{\prod_{l=1}^k(\psi(\Phi(p)+l\alpha)-p)}$, $y\in (0,\infty)$.  When $\alpha=0$ we set $\II_{\alpha,p}^{(q)}(y)=y^{\Phi(q+p)}$, $y\in (0,\infty)$, $q\in [0,\infty)$.
\end{definition}
As already indicated, the role of these scale functions is in the solution to the first passage upwards problem, see Remark~\ref{remark:upwards}. 
\begin{question}\label{question:downward}
Perhaps curiously there seems to be no natural extension of the functions $(\II_{\alpha,p}^{(q)})_{q\in [0,\infty)}$ to the case $\alpha<0$ (so that they would continue to play their role in the solution of the first passage upwards problem). What could be said about this case 
(and hence, viz. Remark~\ref{rmk:dual}, about the first passage downward before absorption at zero 
for spectrally positive pssMp)? 
\end{question}

\section{Two-sided exit  for $Y$}\label{section:tse}
The next result corresponds to Item \eqref{intro:a} from the Introduction. 
\begin{theorem}\label{theorem:tse}
Let $\{c,d\}\subset (0,\infty)$, $c<d$, $y\in [c,d]$, $\{q,\theta\}\subset [0,\infty)$. Then:
\begin{enumerate}[(i)]
\item\label{tse:smooth} $\QQ_y\left[e^{-qT_d^+};T_d^+<T_c^-\right]=\frac{\mathcal{W}^{(qc^{\alpha})}_{\alpha,p}(y/c)}{\mathcal{W}^{(qc^\alpha)}_{\alpha,p}(d/c)}$.
\item\label{tse:non-smooth} $\QQ_y\left[e^{-qT_c^-}\left(\frac{Y_{T_c^-}}{c}\right)^\theta;T_c^-<T_d^+\right]=\mathcal{Z}^{(qc^{\alpha},\theta)}_{\alpha,p}(y/c)-\frac{\mathcal{W}^{(qc^{\alpha})}_{\alpha,p}(y/c)}{\mathcal{W}^{(qc^{\alpha})}_{\alpha,p}(d/c)}\mathcal{Z}^{(qc^{\alpha},\theta)}_{\alpha,p}(d/c)$.
\end{enumerate}
\end{theorem}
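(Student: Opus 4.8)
The plan is to push the problem through the Lamperti transform onto the underlying snLp $X$, where it becomes a two-sided exit problem for $X$ subject to an \emph{additional, state-dependent} killing, and then to invoke the corresponding identities of \cite{zbigniew-bo}. Recall $\QQ_y=\PP_{\log y}$ and that $Y_v=e^{X_{\phi_v}}$ for $v\in[0,\zeta)$, where $\phi$ is the (continuous, strictly increasing) inverse of the (continuous, strictly increasing) functional $I$, so that $\phi\circ I=\mathrm{id}$ and $\zeta=I_\expp$. From this one reads off the deterministic translations, valid $\PP_{\log y}$-a.s.: $\{T_d^+<\infty\}=\{T_d^+<\zeta\}=\{\tau_{\log d}^+<\expp\}$; on this event $T_d^+=I_{\tau_{\log d}^+}=\int_0^{\tau_{\log d}^+}e^{\alpha X_u}\,\dd u$; up to a $\PP_{\log y}$-null set $\{T_d^+<T_c^-\}=\{\tau_{\log d}^+<\tau_{\log c}^-\wedge\expp\}$; and $Y_{T_c^-}=e^{X_{\tau_{\log c}^-}}$ on $\{T_c^-<\infty\}$. (For $q>0$ any path not reaching $d$ before $\zeta$ contributes $0$ to the expectations in question; the case $q=0$ is just the classical two-sided exit for $X$.)

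Next I would integrate out $\expp$. Since $\expp$ is $\mathrm{Exp}(p)$-distributed and independent of $X$, conditioning on the path of $X$ gives $\PP[\expp>t\,\vert\,\sigma(X)]=e^{-pt}$, whence
$$\QQ_y\left[e^{-qT_d^+};\,T_d^+<T_c^-\right]=\PP_{\log y}\left[\exp\left(-\int_0^{\tau_{\log d}^+}(qe^{\alpha X_u}+p)\,\dd u\right);\ \tau_{\log d}^+<\tau_{\log c}^-\right],$$
and likewise for the quantity in \ref{tse:non-smooth}, with the additional weight $(Y_{T_c^-}/c)^\theta=e^{\theta(X_{\tau_{\log c}^-}-\log c)}$ inserted. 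Now shift $X$ spatially by $-\log c$: writing $\widetilde{X}:=X-\log c$, the process $\widetilde{X}$ starts from $\log(y/c)$ under $\QQ_y$, the barriers become $0$ and $\log(d/c)$, and $e^{\alpha X_u}=c^\alpha e^{\alpha \widetilde{X}_u}$, so the effective state-dependent killing rate seen by $\widetilde{X}$ is $\omega(x):=p+qc^\alpha e^{\alpha x}$. This is precisely the origin of the parameter $qc^\alpha$ in the statement.

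Finally I would invoke \cite{zbigniew-bo}. By construction --- the convolution equation \eqref{eq:convolution:W} read with $q$ replaced by $qc^\alpha$, cf.\ \cite[Lemma~2.1]{zbigniew-bo}, equivalently $\mathcal{W}=W+W\star(\omega\,\mathbbm{1}_{[0,\infty)}\mathcal{W})$ by Remark~\ref{remark:equivalent-W} --- the function $\mathcal{W}$ with $\mathcal{W}\circ\log=\mathcal{W}^{(qc^\alpha)}_{\alpha,p}$ is exactly the scale function attached by \cite{zbigniew-bo} to the snLp $\widetilde{X}$ killed at rate $\omega$, and similarly $\mathcal{Z}$ with $\mathcal{Z}\circ\log=\mathcal{Z}^{(qc^\alpha,\theta)}_{\alpha,p}$ is the corresponding adjoint scale function carrying the exponential overshoot weight $e^{\theta\cdot}$ at the lower barrier. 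The two-sided exit identities of \cite{zbigniew-bo} for state-dependently killed snLp, applied with starting point $\log(y/c)$, lower barrier $0$ and upper barrier $\log(d/c)$, then give
$$\PP_{\log(y/c)}\left[\exp\left(-\int_0^{\tau_{\log(d/c)}^+}\omega(\widetilde{X}_u)\,\dd u\right);\ \tau_{\log(d/c)}^+<\tau_0^-\right]=\frac{\mathcal{W}(\log(y/c))}{\mathcal{W}(\log(d/c))}$$
and
$$\PP_{\log(y/c)}\left[\exp\left(-\int_0^{\tau_0^-}\omega(\widetilde{X}_u)\,\dd u\right)e^{\theta\widetilde{X}_{\tau_0^-}};\ \tau_0^-<\tau_{\log(d/c)}^+\right]=\mathcal{Z}(\log(y/c))-\frac{\mathcal{W}(\log(y/c))}{\mathcal{W}(\log(d/c))}\mathcal{Z}(\log(d/c)).$$
Rewriting $\mathcal{W}(\log\cdot)=\mathcal{W}^{(qc^\alpha)}_{\alpha,p}(\cdot)$ and $\mathcal{Z}(\log\cdot)=\mathcal{Z}^{(qc^\alpha,\theta)}_{\alpha,p}(\cdot)$ in the two displays yields \ref{tse:smooth} and \ref{tse:non-smooth} respectively.

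I expect the main obstacle to be not a computation but an alignment: one must verify carefully the deterministic identities linking $(T_d^+,T_c^-,Y_{T_c^-},\zeta)$ to $(\tau_{\log d}^+,\tau_{\log c}^-,X_{\tau_{\log c}^-},\expp)$, checking that the relevant events agree up to null sets and that the boundary conventions at $y=c$ and $y=d$ remain consistent, and must pin down the exact statement of the two-sided exit theorem of \cite{zbigniew-bo} so as to confirm that its parametrization --- the additive constant $p$ inside the killing rate, the exponential overshoot weight, and the placement of the two barriers --- matches the present one verbatim, so that the scale functions it produces are literally $\mathcal{W}$ and $\mathcal{Z}$ as above. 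Once this is in place the conclusion is immediate, and the excursion representation of $\mathcal{W}^{(q)}_{\alpha,p}$ asserted in Lemma~\ref{lemma:fundamental} then drops out as a byproduct of analysing \ref{tse:smooth}.
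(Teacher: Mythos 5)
Part \ref{tse:smooth} of your proposal is essentially the paper's own proof: Lamperti transform, integrating out $\expp$ to produce the state-dependent rate $qe^{\alpha X}+p$, the spatial shift by $-\log c$, and then \cite[Theorem~2.1]{zbigniew-bo} combined with Remark~\ref{remark:equivalent-W}. No issues there.

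For part \ref{tse:non-smooth} there is a genuine gap. You assert that the two-sided exit theorem of \cite{zbigniew-bo} already carries the exponential overshoot weight $e^{\theta \widetilde X_{\tau_0^-}}$, with the solution of \eqref{eq:convolution:Z} (whose inhomogeneous term is $Z^{(p,\theta)}$) as ``the corresponding adjoint scale function''. It does not: as the paper uses it, \cite[Theorem~2.1]{zbigniew-bo} only gives the occupation-weighted exit downwards with \emph{no} overshoot weight, i.e.\ with inhomogeneous term $1$ in the $\mathcal{Z}$-equation. The missing idea is the Esscher transform: one absorbs $e^{\theta X_{\tau_0^-}}$ into the change of measure $\PP\to\PP^\theta$, which converts the overshoot weight into a shift $-\psi(\theta)$ of the killing rate, applies \cite{zbigniew-bo} under $\PP^\theta$, and then undoes the tilt via $W^{(\cdot)}=e^{\theta\cdot}W^{(\cdot-\psi(\theta))}_\theta$ and $Z^{(\cdot,\theta)}=e^{\theta\cdot}Z^{(\cdot-\psi(\theta))}_\theta$ together with Remarks~\ref{remark:equivalent-W} and~\ref{remark:equivalent-Z}. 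This step is not free: the tilted rate $qc^\alpha e^{\alpha x}+p-\psi(\theta)$ must be kept nonnegative (and suitably bounded, whence the paper's truncation $e^{\alpha(\cdot\land(b-a))}$, removed afterwards because the convolution equations are locally determined), which forces the temporary restriction $\theta\in[0,\Phi(p+q(e^{\alpha a}\land e^{\alpha b}))]$; the general case is then recovered by analytic continuation in $q$ at fixed $\theta$, for which Propositions~\ref{proposition:analytic}\eqref{analytic:d} and~\ref{proposition:analytic:Z}\eqref{analytic:Z:entire} are needed. You do flag the need to ``pin down'' the parametrization of the cited theorem, but the resolution is an actual argument rather than a bookkeeping check, so as written the proof of \ref{tse:non-smooth} is incomplete.
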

\begin{remark}
When $\alpha=p=0$ these are (modulo the $\exp$-$\log$ spatial transformation)  classical results for snLp, e.g.  \cite[Eq.~(8.11)]{kyprianou} and  \cite[3rd display on p.~4]{aiz}. 
\end{remark}
\begin{remark}\label{remark:upwards}
The first passage upward problem can in principle be seen as a limiting case (as $c\downarrow 0$) of the two-sided exit problem, however the resulting limits do not appear easy to evaluate directly. Nevertheless, the following result is known \cite[Theorem~2.1]{pierre} \cite[Theorem~13.10(ii)]{kyprianou} \cite[Example~3.2]{vidmar} for the case $\alpha>0$ and  \cite[Theorem~3.12]{kyprianou} $\alpha=0$:
$$\QQ_y[e^{-qT_d^+};T_d^+<\zeta]=\frac{\II_{\alpha,p}^{(q)}(y)}{\II_{\alpha,p}^{(q)}(d)},\quad d\in [y,\infty),y\in (0,\infty),q\in [0,\infty).$$
\end{remark}
\begin{proof}
Let $a:=\log c$, $b:=\log d$ and $x:=\log y$. 

\ref{tse:smooth}. From the Lamperti transform, the spatial homogeneity of $X$, the independence of $X$ from $\expp$, Remark~\ref{remark:equivalent-W} and \cite[Theorem~2.1]{zbigniew-bo}, we have $\QQ_y\left[e^{-qT_d^+};T_d^+<T_c^-\right]=\PP_ x[e^{-q\int_0^{\tau_b^+}e^{\alpha X_s}\dd s};\tau_b^+<\tau_a^-\land \expp]=\PP_ x[e^{-\int_0^{\tau_b^+}(qe^{\alpha X_s}+p)\dd s};\tau_b^+<\tau_a^-]=\PP_ {x-a}[e^{-\int_0^{\tau_{b-a}^+}(qe^{\alpha a}e^{\alpha X_s}+p)\dd s};\tau_{b-a}^+<\tau_0^-]=\frac{\mathcal{W}^{(qe^{\alpha a})}_{\alpha,p}(e^{x-a})}{\mathcal{W}^{(qe^{\alpha a})}_{\alpha,p}(e^{b-a})}$. 

\ref{tse:non-smooth}. Again the Lamperti transform, the spatial homogeneity of $X$ and the independence of $X$ from $\expp$ yield $\QQ_y\left[e^{-qT_c^-}\left(\frac{Y_{T_c^-}}{c}\right)^\theta;T_c^-<T_d^+\right]=\PP_ x[e^{-q\int_0^{\tau_a^-}e^{\alpha X_s}\dd s+\theta(X_{\tau_a^-}-a)};\tau_a^-<\tau_b^+\land\expp]=\PP_ {x-a}\left[e^{-\int_0^{\tau_0^-}(qe^{\alpha a}e^{\alpha X_s}+p)\dd s+\theta X_{\tau_0^-}};\tau_0^-<\tau_{b-a}^+\right]$. 
Then,  via the Esscher transform, $\QQ_y\left[e^{-qT_c^-}\left(\frac{Y_{T_c^-}}{c}\right)^\theta;T_c^-<T_d^+\right]=e^{\theta(x-a)}\PP_{x-a}^{\theta}[e^{-\int_0^{\tau_0^-}(qe^{\alpha a}e^{\alpha X_s}+p-\psi(\theta))\dd s};\tau_0^-<\tau_{b-a}^+]$. At least for $\theta\in [0,\Phi(p+q(e^{\alpha a}\land e^{\alpha b}))]$, it now follows from \cite[Theorem~2.1]{zbigniew-bo} that $$\QQ_y\left[e^{-qT_c^-}\left(\frac{Y_{T_c^-}}{c}\right)^\theta;T_c^-<T_d^+\right]=e^{\theta(x-a)}\left(\mathcal{Z}(x-a)-\frac{\mathcal{W}(x-a)}{\mathcal{W}(b-a)}\mathcal{Z}(b-a)\right),$$ where $\mathcal{W}$ and $\mathcal{Z}$ are, respectively, the unique locally bounded and Borel measurable solutions to the convolution equations 
$$\mathcal{W}=W_\theta+W_\theta\star ((qe^{\alpha a} e^{\alpha(\cdot \land (b-a))}+p-\psi(\theta))\mathbbm{1}_{[0,\infty)}\mathcal{W})$$
and
$$\mathcal{Z}=1+W_\theta\star ((qe^{\alpha a} e^{\alpha(\cdot \land (b-a)) }+p-\psi(\theta))\mathbbm{1}_{[0,\infty)}\mathcal{Z}),$$
i.e. \cite[Lemma~2.1 \& Eq.~(2.10)]{zbigniew-bo} 
$$\mathcal{W}=W_\theta^{(qe^{\alpha a}+p-\psi(\theta))}+W_\theta^{(qe^{\alpha a}+p-\psi(\theta))}\star ((qe^{\alpha a} e^{\alpha(\cdot \land (b-a))}-1)\mathbbm{1}_{[0,\infty)}\mathcal{W})$$
and 
$$\mathcal{Z}=Z^{(qe^{\alpha a}+p-\psi(\theta))}+W_\theta^{(qe^{\alpha a}+p-\psi(\theta))}\star ((qe^{\alpha a} e^{\alpha(\cdot \land (b-a)) }-1))\mathbbm{1}_{[0,\infty)}\mathcal{Z}).$$
Because these solutions are ``locally determined'', and we only need them on the interval $[0,b-a]$ (in fact only at the points $x-a$ and $b-a$), we may now drop ``$\land (b-a)$''. Moreover, multiplying both sides of the preceding two displays by $e^{\theta\cdot}$ and exploiting the relations $W^{(qe^{\alpha a}+p)}=e^{\theta\cdot}W_\theta^{(qe^{\alpha a}+p-\psi(\theta))}$ and $Z^{(qe^{\alpha a}+p,\theta)}=e^{\theta\cdot}Z_\theta^{(qe^{\alpha a}+p-\psi(\theta))}$, we find that $\QQ_y\left[e^{-qT_c^-}\left(\frac{Y_{T_c^-}}{c}\right)^\theta;T_c^-<T_d^+\right]=\left(\mathcal{Z}(x-a)-\frac{\mathcal{W}(x-a)}{\mathcal{W}(b-a)}\mathcal{Z}(b-a)\right)$, where 
$\mathcal{W}$ and $\mathcal{Z}$ are, respectively, the unique locally bounded and Borel measurable solutions to the convolution equations 
$$\mathcal{W}=W^{(qe^{\alpha a}+p)}+W^{(qe^{\alpha a}+p)}\star ((qe^{\alpha a} e^{\alpha\cdot }-1)\mathbbm{1}_{[0,\infty)}\mathcal{W})$$
and
$$\mathcal{Z}=Z^{(qe^{\alpha a}+p,\theta)}+W^{(qe^{\alpha a}+p)}\star  ((qe^{\alpha a} e^{\alpha\cdot }-1)\mathbbm{1}_{[0,\infty)}\mathcal{Z}).$$
The claim now follows from Remarks~\ref{remark:equivalent-W} and ~\ref{remark:equivalent-Z}, assuming still that $\theta\in [0,\Phi(p+q(e^{\alpha a}\land e^{\alpha b}))]$. The general case is got by analytic continuation in $q$ at fixed $\theta$.
\end{proof}
As announced in the previous section the solution to the two-sided exit problem may be used to give the excursion-theoretic relation of Lemma~\ref{lemma:fundamental}.
\begin{proof}[Proof of Lemma~\ref{lemma:fundamental}]\label{proof-conclusion}
From Theorem~\ref{theorem:tse}\ref{tse:smooth}, the notation of (the proof of) which we retain, we see that $\frac{\mathcal{W}^{(qe^{\alpha a})}_{\alpha,p}(e^{x-a})}{\mathcal{W}^{(qe^{\alpha a})}_{\alpha,p}(e^{b-a})}=\PP_ x[e^{-\int_0^{\tau_b^+}(qe^{\alpha X_s}+p)\dd s};\tau_b^+<\tau_a^-]=\PP[e^{-\int_0^{\tau_{b-x}^+}(qe^{\alpha x}e^{\alpha X_s}+p)\dd s};\tau_{b-x}^+<\tau_{a-x}^-]=\PP[e^{-\int_0^{\tau_{b-x}^+}(qe^{\alpha x}e^{\alpha X_s}+p)\mathbbm{1}_{\{\overline{X}_s=X_s\}}\dd s}\prod_{g\in \DD\cap [0,b-x]}e^{-\int_0^{\chi(\epsilon_g)}(qe^{\alpha (x+g)}e^{\alpha \epsilon_g(s)}+p)\dd s}\mathbbm{1}_{\{g+\underline{\epsilon_g}_{\infty}\geq a-x\}}]$, where the first factor in the $\PP$-expectation is only not equal to $1$ when $X$ has paths of finite variation \cite[Theorem~6.7]{kyprianou}, in which case $X$ is the difference of a strictly positive drift $\delta$ and a (non-vanishing) subordinator, and this factor is then $\PP$-a.s. equal to $e^{-\int_0^{(b-x)/\delta}(qe^{\alpha x}e^{\alpha \delta s}+p)\dd s}=e^{-(p(b-x)+q(e^{\alpha b}-e^{\alpha x})/\alpha)/\delta}$, where the expression is understood in the limiting sense when $\alpha=0$ (see e.g. \cite[proof of Theorem~4.1]{kyprianou}). As for the second factor, by the exponential formula for Ppp \cite[Theorem~4.5]{ito} its $\PP$-expectation is equal to $\PP[\prod_{g\in \DD\cap [0,b-x]}e^{-\int_0^{\chi(\epsilon_g)}(qe^{\alpha (x+g)}e^{\alpha \epsilon_g(s)}+p)\dd s}\mathbbm{1}_{\{g+\underline{\epsilon_g}_{\infty}\geq a-x\}}]=\exp\{-\int_x^{b}\nn[1-e^{-\int_0^{\chi}(qe^{\alpha g}e^{\alpha\xi_s}+p)\dd s}\mathbbm{1}_{\{g+\underline{\xi}_{\infty}\geq a\}}]\dd g\}$. 
\end{proof}
We conclude this section by using Theorem~\ref{theorem:tse} to show that the scale functions are naturally related to a family of martingales involving the process $Y$.
\begin{corollary}\label{corollary}
Let $\{q,\theta\}\subset [0,\infty)$ and $\{y,d\}\subset (0,\infty)$. Set $\GG_s:=\FF_{\phi_s}$ for $s\in [0,\infty)$. The processes $(e^{-qs}\WW(Y_s)\mathbbm{1}_{\{s<\zeta\}})_{s\in [0,\infty)}$ and  $(e^{-qs}\mathcal{Z}^{(q,\theta)}_{\alpha,p}(Y_s)\mathbbm{1}_{\{s<\zeta\}})_{s\in [0,\infty)}$ stopped at $T_1^-\land T_d^+$ are bounded c\`adl\`ag martingales under $\QQ_y$ in the filtration $\GG=(\GG_s)_{s\in [0,\infty)}$; their terminal values are, respectively, $\WW(d)e^{-qT_d^+}\mathbbm{1}_{\{T_d^+<T_1^-\}}$ and $e^{-q (T_1^-\land T_d^+)}(\mathcal{Z}^{(q,\theta)}_{\alpha,p}(d)\mathbbm{1}_{\{T_d^+<T_1^-\}}+(Y_{T_1^-})^\theta\mathbbm{1}_{\{T_1^-<T_d^+\}})$ a.s.-$\QQ_y$. 
\end{corollary}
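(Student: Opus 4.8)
The plan is to show that each of the two stopped processes coincides, at every deterministic time $s$, $\QQ_y$-a.s. with the $\GG$-conditional expectation of an explicit integrable random variable --- its eventual value --- which, together with an elementary boundedness estimate, yields the martingale property, the càdlàg property and the identification of the terminal value all at once. Put $T:=T_1^-\land T_d^+$; we may assume $1\leq y\leq d$. Via the Lamperti transform $Y$ is, under any $\QQ_z$, the exponential of the $\expp$-killed snLp $X$ run through the continuous strictly increasing time change $\phi$, so $Y$ is Markov in $\GG=(\FF_{\phi_s})_s$, the random times $T_1^-,T_d^+$ and $\zeta=I_\expp$ are $\GG$-stopping times, $\{s<\zeta\}=\{Y_s\neq\partial\}\in\GG_s$, and, $X$ having no positive jumps, $Y$ passes continuously through the upper level so that $Y_{T_d^+}=d$ on $\{T_d^+<\zeta\}$. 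I would first record boundedness: the first stopped process is $e^{-q(s\land T)}\WW(Y_{s\land T})\mathbbm{1}_{\{s\land T<\zeta\}}$, which vanishes on $\{s\land T\geq\zeta\}$, while on $\{s\land T<\zeta\}$ one has $Y_{s\land T}\in(0,d]$, lying in $[1,d]$ unless $s\land T=T_1^-<T_d^+$; since $\WW,\ZZq$ are continuous on $[1,\infty)$ (Propositions~\ref{proposition:analytic} and~\ref{proposition:analytic:Z}) with $\WW\equiv0$ on $(0,1)$, $\ZZq\equiv1$ on $(0,1]$ and $\WW$ increasing, both $\WW(Y_{s\land T})\leq\WW(d)$ and $\ZZq(Y_{s\land T})$ is bounded there, whence the two processes are bounded, $\GG$-adapted, and --- $Y$ being càdlàg and stopping preserving this --- càdlàg.

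Next I would compute the pointwise limits. Since $X$ is non-monotone, $T<\infty$ $\QQ_y$-a.s.: when $p=0$, either $\limsup X=+\infty$ and then $\tau_{\log d}^+<\infty$ so $T_d^+<\infty$, or $X\to-\infty$ and then $\tau_0^-<\infty$ (as $X_0=\log y\geq0$) so $T_1^-<\infty$; when $p>0$ one has $\zeta<\infty$ and the processes are eventually null on $\{T=\infty\}$. On $\{T<\infty\}$ (so $T<\zeta$), for $s\geq T$ the first process equals $e^{-qT}\WW(Y_T)$, which on $\{T_d^+<T_1^-\}$ is $\WW(d)e^{-qT_d^+}$ and on $\{T_1^-<T_d^+\}$ vanishes a.s.: either $X$ does not creep downwards (paths of bounded variation, or $\diffusion=0$) and then $Y_{T_1^-}<1$ strictly, so $\WW(Y_{T_1^-})=0$; or $X$ creeps downwards, which forces paths of infinite variation and hence $\WW(1)=\Wp(0)=0$. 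Thus the first process tends a.s. to $\Xi:=\WW(d)e^{-qT_d^+}\mathbbm{1}_{\{T_d^+<T_1^-\}}$, and likewise, using $\ZZq(Y_{T_1^-})=1$ (as $Y_{T_1^-}\leq1$) and $\ZZq(Y_{T_d^+})=\ZZq(d)$, the second tends a.s. to $\Xi':=e^{-qT}\bigl(\ZZq(d)\mathbbm{1}_{\{T_d^+<T_1^-\}}+\mathbbm{1}_{\{T_1^-<T_d^+\}}\bigr)$ --- precisely the claimed terminal values.

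It remains to check, for every $z\in[1,d]$ and $s\geq0$, that the first stopped process equals $\QQ_y[\Xi\mid\GG_s]$ and the second $\QQ_y[\Xi'\mid\GG_s]$ $\QQ_y$-a.s. From Theorem~\ref{theorem:tse} (applied with $c=1$, so $c^\alpha=1$, and with $\theta=0$ in part~\ref{tse:non-smooth}) one has, for $z\in[1,d]$,
$$\QQ_z[\Xi]=\WW(d)\,\QQ_z\bigl[e^{-qT_d^+};T_d^+<T_1^-\bigr]=\WW(z),\qquad \QQ_z[\Xi']=\ZZq(d)\tfrac{\WW(z)}{\WW(d)}+\Bigl(\ZZq(z)-\tfrac{\WW(z)}{\WW(d)}\ZZq(d)\Bigr)=\ZZq(z).$$
On $\{s<T\land\zeta\}$ --- an event in $\GG_s$ --- no exit of $(1,d)$ and no killing has occurred by time $s$, whence $T_d^+=s+T_d^+\circ\theta_s$ and $T_1^-=s+T_1^-\circ\theta_s$ for the shift operators $\theta_s$, and therefore $\Xi=e^{-qs}(\Xi\circ\theta_s)$ and $\Xi'=e^{-qs}(\Xi'\circ\theta_s)$; so by the Markov property of $Y$ at $s$, $\QQ_y[\Xi\mid\GG_s]=e^{-qs}\QQ_{Y_s}[\Xi]=e^{-qs}\WW(Y_s)$ on that event, which is the value there of the first stopped process, and similarly $\QQ_y[\Xi'\mid\GG_s]=e^{-qs}\ZZq(Y_s)$. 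On $\{s\geq T\}$ (so $T<\zeta$) the first stopped process already equals $e^{-qT}\WW(Y_T)=\Xi$ --- a $\GG_s$-measurable quantity, by the previous paragraph --- making the identity trivial, and similarly for $\Xi'$; and on $\{s\geq\zeta\}\cap\{s<T\}$ one has $T=\infty$, both $\Xi,\Xi'$ vanish pointwise, and so do the two processes. Combining the three cases yields the two identities; being additionally bounded and càdlàg, the processes are thus bounded càdlàg $\GG$-martingales under $\QQ_y$ with terminal values $\Xi$ and $\Xi'$, as asserted.

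I expect the main obstacle to be the careful bookkeeping around the time-changed filtration --- that $T_1^-,T_d^+,\zeta$ are $\GG$-stopping times and that the shift identities are valid once the resampled killing is accounted for --- together with the creeping dichotomy that forces $\WW(Y_{T_1^-})\mathbbm{1}_{\{T_1^-<T_d^+\}}=0$ $\QQ_y$-a.s.; the rest is routine once Theorem~\ref{theorem:tse} is in hand.
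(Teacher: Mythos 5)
Your proof is correct and follows essentially the same route as the paper's: establish that $Y$ is Markov in the time-changed filtration $\GG$, compute the $\GG_s$-conditional expectation of the candidate terminal value via Theorem~\ref{theorem:tse}, and dispose of the value at $T_1^-$ by the same creeping/finite-variation dichotomy (no creeping forces $Y_{T_1^-}<1$, while creeping forces infinite variation and $\WW(1)=\Wp(0)=0$). Your additional bookkeeping (a.s.\ finiteness of $T_1^-\land T_d^+$ when $p=0$, the case $s\geq\zeta$, and the explicit boundedness estimate) only makes explicit what the paper leaves implicit.
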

 \begin{remark}
Recall that $Y$ enjoys the self-similarity property: the law of $(cY_{sc^{-\alpha}})_{s\in [0,\infty)}$ under $\QQ_y$ is that of $Y$ under $\QQ_{cy}$, which means that the ``$1$'' in the preceding may be generalized to a $c\in (0,\infty)$, subject to the obvious changes.
\end{remark}
\begin{remark}
When $\alpha=0$ this corollary becomes a well-known (at least for $\theta=0$) property of the scale functions of snLp, e.g. \cite[Remark~5]{akp}.
\end{remark}
\begin{remark}
One has the parallel martingales from the first passage upwards problem: when $\alpha\geq 0$, then the process $(e^{-qs}\II^{(q)}_{\alpha,p}(Y_s)\mathbbm{1}_{\{s<\zeta\}})_{s\in [0,\infty)}$ stopped at $T_d^+$ is a bounded c\`adl\`ag martingale in $\GG$ under $\QQ_y$ with terminal value $\II^{(q)}_{\alpha,p}(d)e^{-q T_d^+}\mathbbm{1}_{\{T_d^+<\zeta\}}$ a.s.-$\QQ_y$ \cite[Remark~3.1]{vidmar:pssMp}.
\end{remark}
\begin{proof}
Remark that since $\FF$ is right-continuous, then so is $\GG$ \cite[Lemma~6.3]{kallenberg}, hence $T_1^+$ and $T_d^+$ are $\GG$-stopping times. We may assume $1\leq y\leq d$. 

The assumptions on  $X$, $\expp$ and $\FF$ entail that for any $\FF$-stopping time $S$, on $\{S<\expp\}$, $\FF_S$ is independent of $((X_{S+u}-X_S)_{u\in [0,\infty)},\expp-S)$, which has the distribution of $(X,\expp)$ under $\PP$. In consequence $Y$ is Markov with life-time $\zeta$, cemetery state $\partial$, in the  filtration $\GG$, under the probabilities $(\QQ_y)_{y\in (0,\infty)}$ (note that $\zeta$ is a $\GG$-stopping time, and would be so, even if we had not assumed $\FF$ to be right-continuous).

Let now $s\in [0,\infty)$. We compute: 
$$\QQ_y[\WW(d)e^{-q T_d^+}\mathbbm{1}_{\{T_d^+<T_1^-\}}\vert\GG_s]$$\footnotesize
$$=e^{-q(T_1^-\land T_d^+)}\WW(Y_{T_1^-\land T_d^+})\mathbbm{1}_{\{T_1^-\land T_d^+\leq s\}}+\WW(d)e^{-qs}\mathbbm{1}_{\{s<T_1^-\land T_d^+\}}\QQ_{Y_{s}}[e^{-qT_d^+};T_d^+<T_1^-]\mathbbm{1}_{\{s<\zeta\}}$$\normalsize
$$=e^{-q(T_1^-\land T_d^+)}\WW(Y_{T_1^-\land T_d^+})\mathbbm{1}_{\{T_1^-\land T_d^+\leq s\}}+\WW(Y_{s})e^{-qs}\mathbbm{1}_{\{s<T_1^-\land T_d^+,s<\zeta\}}$$
$$=e^{-q(s\land T_1^-\land T_d^+)}\WW(Y_{s\land T_1^-\land T_d^+})\mathbbm{1}_{\{s\land T_1^-\land T_d^+<\zeta\}},$$ a.s.-$\QQ_y$,
where the first equality uses the following facts: that $\WW$ vanishes on $(0,1]$ when $X$ has paths of infinite variation, whereas otherwise it vanishes on $(0,1)$,  but then $X$ does not creep downwards (indeed it creeps downwards iff $\sigma^2>0$ \cite[p.~232]{kyprianou}); that $X$ has no positive jumps; the Markov property. The second equality of the preceding display follows from Theorem~\ref{theorem:tse}\ref{tse:smooth}.

Similarly 
$$\QQ_y[e^{-q (T_1^-\land T_d^+)}(\mathcal{Z}^{(q,\theta)}_{\alpha,p}(d)\mathbbm{1}_{\{T_d^+<T_1^-\}}+(Y_{T_1^-})^\theta\mathbbm{1}_{\{T_1^-<T_d^+\}})\vert\GG_s]$$\footnotesize
$$=e^{-q (T_1^-\land T_d^+)}\mathcal{Z}^{(q,\theta)}_{\alpha,p}(Y_{T_1^-\land T_d^+})\mathbbm{1}_{\{T_1^-\land T_d^+\leq s\}}+e^{-qs}\QQ_{Y_{s}}[e^{-q (T_1^-\land T_d^+)}(\mathcal{Z}^{(q,\theta)}_{\alpha,p}(d)\mathbbm{1}_{\{T_d^+<T_1^-\}}+(Y_{T_1^-})^\theta\mathbbm{1}_{\{T_1^-<T_d^+\}})]\mathbbm{1}_{\{s<\zeta,s<T_1^-\land T_d^+\}}$$
$$=e^{-q (T_1^-\land T_d^+)}\mathcal{Z}^{(q,\theta)}_{\alpha,p}(Y_{T_1^-\land T_d^+})\mathbbm{1}_{\{T_1^-\land T_d^+\leq s\}}+e^{-qs}\left(\frac{\WW(Y_{s})}{\WW(d)}\mathcal{Z}^{(q,\theta)}_{\alpha,p}(d)+\mathcal{Z}^{(q,\theta)}_{\alpha,p}(Y_{s})-\frac{\mathcal{W}^{(q)}_{\alpha,p}(Y_{s})}{\mathcal{W}^{(q)}_{\alpha,p}(d)}\mathcal{Z}^{(q,\theta)}_{\alpha,p}(d)\right)\mathbbm{1}_{\{s<\zeta,s<T_1^-\land T_d^+\}}$$\normalsize
$$=e^{-q (T_1^-\land T_d^+\land s)}\mathcal{Z}^{(q,\theta)}_{\alpha,p}(Y_{T_1^-\land T_d^+\land s})\mathbbm{1}_{\{s\land T_1^-\land T_d^+<\zeta\}},$$ a.s.-$\QQ_y$, where the first equality uses  $\mathcal{Z}^{(q,\theta)}_{\alpha,p}(y)=y^\theta$ for $y\in (0,1]$, that $X$ has no positive jumps and the Markov property, while the second equality follows from Theorem~\ref{theorem:tse}.
\end{proof}
\section{Mixed first passage for $(Y,R)$}\label{section:drawdowns}
Our last result concerns Items \eqref{intro:b} and \eqref{intro:c} from the Introduction. To streamline the expressions that we will obtain, we specify one last piece of notation, to wit
\begin{equation}
\mathsf{I}^{(q)}_{\alpha,p}(y,d;r):=\int_y^{d}r(z)\frac{(\mathcal{W}^{(q(z/r(z))^{\alpha})}_{\alpha,p})'_+ (r(z))}{\mathcal{W}^{(q(z/r(z))^{\alpha})}_{\alpha,p}(r(z))}\frac{\dd z}{z},\quad \{y,d\}\subset (0,\infty),\, y\leq d, 
\end{equation}
where further $q\in [0,\infty)$ and $r\in  \mathcal{B}_{(0,\infty)}/\mathcal{B}_{(1,\infty)}$.
\begin{theorem}\label{theorem:drawdown}
Let $\{y,d\}\subset (0,\infty)$, $y\leq d$, $r\in \mathcal{B}_{(0,\infty)}/\mathcal{B}_{(1,\infty)}$, $\{q,\theta,\gamma\}\subset [0,\infty)$. Assume $\Sigma_r$ is measurable.  Then:
\begin{enumerate}[(i)]
\item\label{drawdown:i} $\QQ_y\left[e^{-qT_d^+};T_d^+<\Sigma_r\right]=e^{-\mathsf{I}^{(q)}_{\alpha,p}(y,d;r)}$.
\item\label{drawdown:ii} $\QQ_y\left[e^{-q \Sigma_r-\gamma L_{\Sigma_r}}\left(\frac{r(\overline{Y}_{\Sigma_r})}{R_{\Sigma_r}}\right)^\theta;\Sigma_r<T_d^+ \right]$ \footnotesize
$$=\bigintsss_y^{d}r(z)e^{-\mathsf{I}^{(q+\gamma)}_{\alpha,p}(y,z;r)}\left[\frac{(\mathcal{W}^{(q(z/r(z))^{\alpha})}_{\alpha,p})^\prime_+(r(z))}{\mathcal{W}^{(q(z/r(z))^{\alpha})}_{\alpha,p}(r(z))}\mathcal{Z}^{(q(z/r(z))^{\alpha},\theta)}_{\alpha,p}(r(z))-\mathcal{Z}^{(q(z/r(z))^{\alpha},\theta)\prime}_{\alpha,p}(r(z))\right]\frac{\dd z}{z}.$$ \normalsize
\end{enumerate}
\end{theorem}
\begin{remark}
By the d\'ebut theorem certainly $\Sigma_r$ is universally measurable. 
\end{remark}
\begin{remark}
For the special case $\alpha=p=0$  see (modulo the trivial $\exp-\log$ spatial transformation): \cite[Theorem~1]{mp} (and also, for \ref{drawdown:i}, in case $X$ has paths of infinite variation, \cite[Eq.~(3.1)]{llz}) when $r$ is constant; \cite[Proposition~3.1]{li-vu} when $r$ is not necessarily constant.
\end{remark}
\begin{remark}\label{rmk:density}
Note that $\{\Sigma_r<T_d^+\}=\{\overline{Y}_{\Sigma_r}\leq d\}$ on $\{\Sigma_r<\zeta\}$. Consequently, by a monotone class argument, \ref{drawdown:ii}  implies 
that $\QQ_y\left[e^{-q \Sigma_r-\gamma L_{\Sigma_r}}\left(\frac{r(\overline{Y}_{\Sigma_r})}{R_{\Sigma_r}}\right)^\theta f(\overline{Y}_{\Sigma_r});\Sigma_r<\zeta\right]=$\footnotesize
$$\bigintsss_y^\infty \!\!\!\!\!\!r(z)e^{-\mathsf{I}^{(q+\gamma)}_{\alpha,p}(y,z;r)}\left[\frac{(\mathcal{W}^{(q(z/r(z))^{\alpha})}_{\alpha,p})^\prime_+(r(z))}{\mathcal{W}^{(q(z/r(z))^{\alpha})}_{\alpha,p}(r(z))}\mathcal{Z}^{(q(z/r(z))^{\alpha},\theta)}_{\alpha,p}(r(z))-\mathcal{Z}^{(q(z/r(z))^{\alpha},\theta)\prime}_{\alpha,p}(r(z))\right]f(z)\frac{\dd z}{z}$$ \normalsize for $f\in \mathcal{B}_{(0,\infty)}/\mathcal{B}_{[-\infty,\infty]}$, in the sense that the left-hand side is well-defined iff the right-hand side is so, in which case they are equal.
 \end{remark}
\begin{remark}
Let $p=0$ and set $b:=\log(d)$, $x:=\log(y)$ and $s:=\log\circ r\circ \exp$. Since $p=0$ implies $\Sigma_r\land T_d<\infty$ a.s., then from \ref{drawdown:i}, or else (also as a check) using the fundamental theorem of calculus from \ref{drawdown:ii}, we see that $\QQ_y(\Sigma_r<T_d^+)=\PP_x(\sigma_s<\tau_b^+)=1-\exp\left(-\int_x^{b}\frac{W'_+ (s(w))}{W(s(w))}\dd w\right)$. In particular by monotone convergence $\QQ_y(\Sigma_r<\zeta)=\PP_x(\sigma_s<\infty)=1$ iff $\int_x^\infty\frac{W'_+ (s(w))}{W(s(w))}\dd w=\infty$.
\end{remark}
\begin{proof}
Let $b:=\log(d)$, $x:=\log(y)$ and $s:=\log\circ r\circ \exp$. 

\ref{drawdown:i}. Of course $\QQ_y[e^{-qT_d};T_d^+<\Sigma_r]=\PP_x[e^{-q\int_0^{\tau_b^+}e^{\alpha X_t}\dd t};\tau_b^+<\sigma_s\land \expp]=\PP[e^{-\int_0^{\tau_{b-x}^+}(qe^{\alpha x}e^{\alpha X_t}+p)\dd t};\tau_{b-x}^+<\sigma_{s(x+ \cdot)}]$. By the exponential formula for Ppp  it is equal to
$$e^{-(p(b-x)+qe^{\alpha x}(e^{\alpha (b-x)}-1)/\alpha)/\delta}\exp\left\{-\int_0^{b-x}\nn[1-e^{-\int_0^{\chi}(qe^{\alpha x}e^{\alpha g}e^{\alpha\xi_v}+p)\dd v}\mathbbm{1}_{\{\underline{\xi}_{\infty}\geq -s(x+g)\}}]\dd g\right\}$$
(recall that in the infinite variation case we take $\delta=\infty$). Using Proposition~\ref{proposition:analytic}\eqref{analytic:c} it is now straightforward to check that this agrees precisely with \ref{drawdown:i}.

\ref{drawdown:ii}. We begin by noting that, using the Esscher transform as in the proof of Theorem~\ref{theorem:tse}\ref{tse:non-smooth},\small $$\QQ_y\left[e^{-q \Sigma_r-\gamma L_{\Sigma_r}}\left(\frac{r(\overline{Y}_{\Sigma_r})}{R_{\Sigma_r}}\right)^\theta;\Sigma_r<T_d^+ \right]=\PP_x[e^{-\int_0^{\sigma_s}(qe^{\alpha X_t}+p)\dd t-\gamma \int_0^{G_{\sigma_s}}e^{\alpha X_t}\dd t-\theta(D_{\sigma_s}-s(\overline{X}_{\sigma_s}))};\sigma_s<\tau_b^+]$$\normalsize
$$=e^{\theta x}\PP_x^\theta[e^{-\int_0^{\sigma_s}(qe^{\alpha X_t}+p-\psi(\theta))\dd t-\gamma \int_0^{G_{\sigma_s}}e^{\alpha X_t}\dd t-\theta(\overline{X}_{\sigma_s}-s(\overline{X}_{\sigma_s}))};\sigma_s<\tau_b^+]$$
$$=\PP^\theta[e^{-\int_0^{\sigma_s}(qe^{\alpha x}e^{\alpha X_t}+p-\psi(\theta))\dd t-\gamma \int_0^{G_{\sigma_s}}e^{\alpha x}e^{\alpha X_t}\dd t-\theta(\overline{X}_{\sigma_s}-s(x+\overline{X}_{\sigma_s}))};\sigma_{s(x+\cdot)}<\tau_{b-x}^+]$$
$$=\PP^\theta\Big[\sum_{g\in \DD\cap [0,b-x]}\mathbbm{1}_{[0,\sigma_{s(x+\cdot)}]}(\tau_{g-}^+)\exp\left(-\int_0^{\tau_{g-}^+}((q+\gamma)e^{\alpha x}e^{\alpha X_t}+p-\psi(\theta))\dd t-\theta(g-s(x+g))\right)$$
$$\exp\left(-\int_0^{S^-_{-s(x+g)}(\epsilon_g)}(qe^{\alpha (x+g)}e^{\alpha \epsilon_g(t)}+p-\psi(\theta))\dd t\right)\mathbbm{1}_{\{S_{-s(x+g)}^-(\epsilon_g)<\chi(\epsilon_g)\}}\Big].$$  By the compensation formula \cite[p.~7]{bertoin} for the Ppp of excursions of $X$ from the maximum this becomes 
$$\PP^\theta\Bigg[\int_0^{\overline{X}_\infty\land (b-x)}\mathbbm{1}_{[0,\sigma_{s(x+\cdot)}]}(\tau_{g-}^+)e^{-\int_0^{\tau_{g-}^+}((q+\gamma)e^{\alpha x}e^{\alpha X_t}+p-\psi(\theta))\dd t-\theta(g-s(x+g))}$$
$$\quad\quad\quad\quad\quad\nn^\theta\left[\exp\left(-\int_0^{S^-_{-s(x+g)}}(qe^{\alpha (x+g)}e^{\alpha \xi_t}+p-\psi(\theta))\dd t\right);S_{-s(x+g)}^-<\chi\right]\dd g\Bigg]$$
$$=\int_0^{b-x}e^{-\theta (m-s(x+m))}\PP^\theta[e^{-\int_0^{\tau_m^+}((q+\gamma)e^{\alpha x}e^{\alpha X_t}+p-\psi(\theta))\dd t};\tau_m^+<\sigma_{s(x+\cdot)}]$$ $$\quad\quad\quad\quad\quad\quad\quad\nn^\theta\left[\exp\left(-\int_0^{S^-_{-s(x+m)}}(qe^{\alpha (x+m)}e^{\alpha \xi_t}+p-\psi(\theta))\dd t\right);S_{-s(x+m)}^-<\chi\right]\dd m,$$
where the superscript $\theta$ in $\nn^\theta$ indicates that the quantity pertains to the Esscher transformed process.

Now we know already from the previous part that, for $m\in [0,\infty)$,
$$\PP^\theta[e^{-\int_0^{\tau_m^+}((q+\gamma)e^{\alpha x}e^{\alpha X_t}+p-\psi(\theta))\dd t};\tau_m^+<\sigma_{s(x+\cdot)}]=e^{\theta m }\PP[e^{-\int_0^{\tau_m^+}((q+\gamma)e^{\alpha x}e^{\alpha X_t}+p)\dd t};\tau_m^+<\sigma_{s(x+\cdot)}]$$
$$=e^{\theta m-\mathsf{I}^{(q+\gamma)}_{\alpha,p}(y,ye^m;r)}.$$

Finally the results of  \cite[Proposition~2.2]{doney-conditioning} and Theorem~\ref{theorem:tse}\ref{tse:non-smooth} imply that, for some multiplicative constant $\mathsf{k}\in (0,\infty)$, which depends only on the characteristics of $X$, and then all $m\in [0,\infty)$,\footnotesize
$$\nn^\theta\left[\exp\left(-\int_0^{S^-_{-s(x+m)}}(qe^{\alpha (x+m)}e^{\alpha \xi_t}+p-\psi(\theta))\dd t\right);S_{-s(x+m)}^-<\chi\right]$$
$$=\mathsf{k}\lim_{z\downarrow  0}\frac{\PP_{-z}^\theta\left[\exp\left(-\int_0^{\tau_{-s(x+m)}^-}(qe^{\alpha (x+m)}e^{\alpha X_t}+p-\psi(\theta))\dd t\right);\tau_{-s(x+m)}^-<\tau_0^+\right]}{z}$$\normalsize
$$=\mathsf{k}e^{-\theta s(x+m)}\lim_{z\downarrow 0}\frac{\PP_{s(x+m)-z}\left[\exp\left(-\int_0^{\tau_{0}^-}(qe^{\alpha (x+m-s(x+m))}e^{\alpha X_t}+p)\dd t+\theta X_{\tau_0^-}\right);\tau_{0}^-<\tau_{s(x+m)}^+\right]}{z}$$\footnotesize
$$=\mathsf{k}e^{-\theta s(x+m)}\lim_{z\downarrow 0}\frac{\mathcal{Z}^{(q(ye^m/r(ye^m))^{\alpha},\theta)}_{\alpha,p}(r(ye^m)e^{-z})-\frac{\mathcal{W}^{(q(ye^m/r(ye^m))^{\alpha})}_{\alpha,p}(r(ye^m)e^{-z})}{\mathcal{W}^{(q(ye^m/r)^{\alpha})}_{\alpha,p}(r(ye^m))}\mathcal{Z}^{(q(ye^m/r(ye^m))^{\alpha},\theta)}_{\alpha,p}(r(ye^m))}{z}$$
$$=\mathsf{k}r(ye^m)e^{-\theta s(x+m)}\left[\frac{(\mathcal{W}^{(q(ye^m/r(ye^m))^{\alpha})}_{\alpha,p})^\prime_+(r(ye^m))}{\mathcal{W}^{(q(ye^m/r(ye^m))^{\alpha})}_{\alpha,p}(r)}\mathcal{Z}^{(q(ye^m/r(ye^m))^{\alpha},\theta)}_{\alpha,p}(r(ye^m))-\mathcal{Z}^{(q(ye^m/r(ye^m))^{\alpha},\theta)\prime}_{\alpha,p}(r(ye^m))\right].$$\normalsize
Bringing everything together, the result follows: one establishes that $\mathsf{k}=1$ for instance by plugging in $q=\theta=\gamma=p=0$, $r$ constant, and comparing to part \ref{drawdown:i} (it is well-known that for a constant $s$, $\sigma_s<\infty$ a.s., so that, when $p=0$, also $\Sigma_r<\infty$ a.s.).
\end{proof}
\begin{corollary}\label{corollary:drawdowns}
Let $y\in (0,\infty)$, $r\in \mathcal{B}_{(0,\infty)}/\mathcal{B}_{(1,\infty)}$, $\{q,\gamma,\theta\}\subset [0,\infty)$. Then:
$\QQ_y\left[e^{-q \Sigma_r-\gamma L_{\Sigma_r}}\left(\frac{r(\overline{Y}_{\Sigma_r})}{R_{\Sigma_r}}\right)^\theta;\Sigma_r<\zeta \right]$ 
$$=\bigintsss_y^{\infty}r(l)e^{-\mathsf{I}^{(q+\gamma)}_{\alpha,p}(y,l;r)}\left[\frac{(\mathcal{W}^{(q(l/r(l))^{\alpha})}_{\alpha,p})^\prime_+(r(l))}{\mathcal{W}^{(q(l/r(l))^{\alpha})}_{\alpha,p}(r(l))}\mathcal{Z}^{(q(l/r(l))^{\alpha},\theta)}_{\alpha,p}(r(l))-(\mathcal{Z}^{(q(l/r(l))^{\alpha},\theta)}_{\alpha,p})^\prime(r(l))\right]\frac{\dd l}{l};$$ \normalsize
in particular $\QQ_y\left[e^{-q \Sigma_r};\Sigma_r<\zeta \right]$
$$
=\bigintsss_y^{\infty}r(l)e^{-\mathsf{I}^{(q)}_{\alpha,p}(y,l;r)}\left[\frac{(\mathcal{W}^{(q(l/r(l))^{\alpha})}_{\alpha,p})^\prime_+(r(l))}{\mathcal{W}^{(q(l/r(l))^{\alpha})}_{\alpha,p}(r(l))}\mathcal{Z}^{(q(l/r(l))^{\alpha})}_{\alpha,p}(r(l))-\mathcal{Z}^{(q(l/r(l))^{\alpha})\prime}_{\alpha,p}(r(l))\right]\frac{\dd l}{l}.
$$
\end{corollary}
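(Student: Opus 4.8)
The approach is to recognise the first identity as the degenerate instance $f\equiv1$ of the formula recorded in Remark~\ref{rmk:density} --- which itself came out of Theorem~\ref{theorem:drawdown}\ref{drawdown:ii} by a monotone-class argument --- and then to obtain the second identity by specialising the parameters; so, apart from bookkeeping, little needs to be done. The one thing to check in the degenerate case is the admissibility of $f\equiv1$ in Remark~\ref{rmk:density}, and I would argue it as follows. The random variable $e^{-q\Sigma_r-\gamma L_{\Sigma_r}}\bigl(r(\overline{Y}_{\Sigma_r})/R_{\Sigma_r}\bigr)^{\theta}\mathbbm{1}_{\{\Sigma_r<\zeta\}}$ is bounded by $1$: the two exponential factors do not exceed $1$, and $R_{\Sigma_r}\geq r(\overline{Y}_{\Sigma_r})$ by the very definition of $\Sigma_r$, so for $\theta\geq0$ the power factor is at most $1$ as well. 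Hence the left-hand side of the claimed identity lies in $[0,1]$, in particular is finite, and the ``left-hand side well-defined iff right-hand side well-defined, in which case equal'' clause of Remark~\ref{rmk:density}, applied with $f\equiv1$, then delivers that the right-hand side integral is finite and equal to it. This is precisely the first display.

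For transparency I would also spell out the underlying limiting mechanism, i.e.\ how the first display emerges from Theorem~\ref{theorem:drawdown}\ref{drawdown:ii} upon letting $d\uparrow\infty$. The key point is that on $\{\Sigma_r<\zeta\}$ the path of $Y$ restricted to the compact interval $[0,\Sigma_r]\subset[0,\zeta)$ is c\`adl\`ag, hence bounded, so that $\overline{Y}_{\Sigma_r}<\infty$ there (whatever the sign of $\alpha$); combined with the identity $\{\Sigma_r<T_d^+\}=\{\overline{Y}_{\Sigma_r}\leq d\}$ on $\{\Sigma_r<\zeta\}$ (used already in Remark~\ref{rmk:density}) this gives $\{\Sigma_r<T_d^+\}\uparrow\{\Sigma_r<\zeta\}$ $\QQ_y$-a.s.\ as $d\uparrow\infty$, whence monotone convergence carries the left-hand side of \ref{drawdown:ii} to the left-hand side of the corollary. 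On the right-hand side the bracketed spatial kernel in \ref{drawdown:ii} is nonnegative --- this is visible from the proof of Theorem~\ref{theorem:drawdown}\ref{drawdown:ii}, where it arose, up to the strictly positive constant $\mathsf{k}=1$, as an excursion expectation $\nn^{\theta}[\,\cdots\,]$ of a nonnegative functional --- so monotone convergence applies to the $z$-integral as well and the upper delimiter $d$ may be replaced by $\infty$; finiteness of the left-hand side then forces finiteness of the resulting improper integral.

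Finally the ``in particular'' is the specialisation $\gamma=\theta=0$ of the first display: taking $\gamma=0$ gives $(q+\gamma)(v/r(v))^{\alpha}=q(v/r(v))^{\alpha}$, so the two exponents coincide, while taking $\theta=0$ turns $\mathcal{Z}^{(q(l/r(l))^{\alpha},0)}_{\alpha,p}$ and its spatial derivative into $\mathcal{Z}^{(q(l/r(l))^{\alpha})}_{\alpha,p}$ and $\mathcal{Z}^{(q(l/r(l))^{\alpha})\prime}_{\alpha,p}$ by the abbreviation $\mathcal{Z}^{(q)}_{\alpha,p}:=\mathcal{Z}^{(q,0)}_{\alpha,p}$, which is exactly the second display. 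I expect no genuine obstacle anywhere: the only points deserving minor care are the two monotone passages to the limit $d\uparrow\infty$ --- resting on the a.s.\ increase $\{\Sigma_r<T_d^+\}\uparrow\{\Sigma_r<\zeta\}$ and on the sign of the spatial kernel, both already at our disposal --- and the routine verification that the parameter specialisations line up with the stated formulae.
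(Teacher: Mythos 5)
Your proposal is correct and follows essentially the paper's own route: the paper proves the corollary precisely by applying monotone convergence ($d\uparrow\infty$) in Theorem~\ref{theorem:drawdown}\ref{drawdown:ii}, which is the mechanism you spell out (your detour through Remark~\ref{rmk:density} with $f\equiv1$ is the same argument in disguise). The extra details you supply --- the a.s.\ increase $\{\Sigma_r<T_d^+\}\uparrow\{\Sigma_r<\zeta\}$, the nonnegativity of the spatial kernel, and the specialisation $\gamma=\theta=0$ --- are all sound.
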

\begin{proof}
Apply monotone convergence in Theorem~\ref{theorem:drawdown}\ref{drawdown:ii}.
\end{proof}

\section{An application to a trailing stop-loss selling strategy}\label{section:conclusion} 
We give now some applied flavor to the above results. 

Suppose indeed that, for $y\in (0,\infty)$, we interpret $Y$ under $\QQ_y$ as the price of a risky asset with initial price $y$.  It is natural to, and we will exclude the possibility when $Y$ reaches $\infty$ in finite time with a positive probability (it can only happen when $\alpha<0$, and $p>0$ or else $X$ drifts to $\infty$).

Suppose furthermore that, having bought the asset at time zero, we pursue, for an $r\in (1,\infty)$, a trailing stop loss exit policy to sell the asset once its price has dropped from the running maximum for the first time by strictly more than $(100 (1-r^{-1}))\%$, viz. at the time $\Sigma_r$. This is a  reasonable, psychologically appealing, trading strategy that limits the maximum loss relative to the trailing maximum. See the recent paper \cite{zhang-leung} where such policies and their offspring were studied in the context of diffusions (this paper also gives an overview of the trailing stop literature in quantitative finance).

Let now $q\in [0,\infty)$ be an impatience/discounting parameter. Then the expected discounted payout on liquidation of the asset is given by Remark~\ref{rmk:density}: \footnotesize
$$ \QQ_y[e^{-q\Sigma_r}Y_{\Sigma_r};\Sigma_r<\zeta]=\bigintsss_y^\infty e^{-r\bigintssss_y^{z}\frac{(\mathcal{W}^{(q(v/r)^{\alpha})}_{\alpha,p})^\prime_+ (r)}{\mathcal{ W}^{(q(v/r)^{\alpha})}_{\alpha,p}(r)}\frac{\dd v}{v}}\left[\frac{(\mathcal{W}^{(q(z/r)^{\alpha})}_{\alpha,p})^\prime_+(r)}{\mathcal{W}^{(q(z/r)^{\alpha})}_{\alpha,p}(r)}\mathcal{Z}^{(q(z/r)^{\alpha},1)}_{\alpha,p}(r)-\mathcal{Z}^{(q(z/r)^{\alpha},1)\prime}_{\alpha,p}(r)\right]\dd z.$$\normalsize
It would for instance be interesting to see whether an optimal $r$ exists that maximizes this expectation, and if so, what that $r$ is. 

This and (more importantly) Questions~\ref{question:drawup},~\ref{question:csbp} and~\ref{question:downward} are left open to future research -- with the hope that the above general theory will go some way towards fostering further developments in this area.
\bibliographystyle{plain}
\bibliography{pssMp}

\end{document}